%Final version 26.2.2012 (Ville)

\RequirePackage[english]{babel}
\documentclass%[12pt,twoside,reqno]
{amsart}

\usepackage[OT1]{fontenc}    %
\usepackage{type1cm}         %  for pdf:  ( dvips -t a4 -Pamz -Pcmz
                             %  file.dvi -o file.ps && ps2pdf
                             %  file.ps )

\usepackage{comment}
\usepackage{enumerate}

\usepackage{amsthm}
\RequirePackage{amsmath,amsfonts,amssymb,amsthm}

\RequirePackage[dvips]{graphicx}

\usepackage{psfrag}    
\usepackage[usenames]{color}
  \numberwithin{equation}{section}

  \newcommand{\N}{\mathbb{N}}         % natural numbers
  \newcommand{\R}{\mathbb{R}}         % real line
  \newcommand{\Rn}{\R^n}              % Euclidean space
  \newcommand{\Le}{\mathcal{L}}       % Lebesgue measure

  \DeclareMathOperator{\dist}{dist}

  \newtheorem{thm}{Theorem}[section]
  \newtheorem{lemma}[thm]{Lemma}
  \newtheorem{prop}[thm]{Proposition}
  \newtheorem{cor}[thm]{Corollary}

  \newtheorem{claim}{Claim}

  \theoremstyle{remark}
  \newtheorem{rem}[thm]{Remark}
  \newtheorem{rems}[thm]{Remarks}
  \newtheorem{ex}[thm]{Example}

\begin{document}

\title{On Cantor sets and doubling measures}

\author[M.\ Cs\"ornyei]{Marianna Cs\"ornyei}
\address{Department of Mathematics\\
5734 S. University Avenue\\
Chicago, Illinois 60637\\
USA}
\email{csornyei@math.uchicago.edu}

\author[V.\ Suomala]{Ville Suomala}
\address{Department of Mathematical Sciences\\
P.O. Box 3000\\
FI-90014 University of Oulu\\
 Finland} 
\email{ville.suomala@oulu.fi}

\thanks{The first author was supported by OTKA grant no. 72655. The second author was supported by the Academy of Finland
  (project \#126976).} 

\begin{abstract}
For a large class of Cantor sets on the real-line, we find sufficient
and necessary conditions implying that  
a set has positive (resp. null) measure for all doubling measures of the
real-line. 
We also discuss same type of questions for atomic doubling
measures defined on certain midpoint Cantor sets.
\end{abstract}

\maketitle

\section{Introduction and notation}

Our main goal in this paper is to study the size of Cantor sets on
the real-line $\R$ from the point of view of doubling measures. Recall that
a measure $\mu$ on a metric space $X$ is called
\emph{doubling} if 
there is a constant $c<\infty$ such that 
\[0<\mu(B(x,2r))\le c\mu(B(x,r))<\infty\]
for all $x\in X$ and $r>0$. Here $B(x,r)$ is the open ball with centre
$x\in X$ and radius 
$r>0$. 
We note that the collection of doubling measures on $\R$, and more
generally, on any complete doubling metric space where isolated points
are not dense, is rather rich. For instance, given $\varepsilon>0$,
there are doubling measures on $\R$ having full measure on a
set of Hausdorff and packing dimension at most $\varepsilon$. See
\cite{he}, \cite{t}, \cite{wu}, \cite{krs}.

Let
$\mathcal{D}(\R)$ be the collection of all doubling measures on $\R$ and denote
\begin{align*}
\mathcal{T}=\{C\subset\R\,:\,\mu(C)&=0\text{ for all
}\mu\in\mathcal{D}(\R)\},\\  
\mathcal{F}=\{C\subset\R\,:\,\mu(C)&>0\text{ for all }\mu\in\mathcal{D}(\R)\}.
\end{align*}
In the literature, the sets in $\mathcal{F}$ have
been called quasisymmetrically thick \cite{sw}, \cite{he}, thick for doubling
measures \cite{hww}, and very fat \cite{bhm} and those in
$\mathcal{T}$ have been termed quasisymmetrically null \cite{sw}, \cite{he}, null
for doubling measures \cite{hww}, and thin \cite{bhm}. 
We call $C\subset\R$ \emph{thin} if
$C\in\mathcal{T}$ and \emph{fat} if $C\in\mathcal{F}$.  

In this paper, we address the problems of finding
 sufficient and/or necessary
conditions for a Cantor set $C\subset\R$ to be fat (resp. thin). These
problems arise naturally from the study of compression and expansion properties
of quasisymmetric maps $f\colon\R\rightarrow\R$, see
\cite[13.20]{he}. 
A related problem is to characterise
those subsets $U\subset\R$ which carry
nontrivial doubling measures (\cite[Open problem 1.18]{he03}); If
$C\subset\R$ is a fat Cantor set, then it is easy to see that
$U=\R\setminus C$ does not carry nontrivial doubling measures. For if it did,
then one could extend any doubling measure $\mu$ on $U$ to $\R$ by letting $\mu(C)=0$, and this would
contradict $C$ being fat. 

We begin by discussing thinness and fatness for the middle interval
Cantor sets $C(\alpha_n)$ determined via sequences
$(\alpha_n)_{n=1}^\infty$, $0<\alpha_n<1$, as follows: 
We first remove an
open interval of length $\alpha_1$ from the
middle of $I_{1,1}=[0,1]$ and denote the remaining two intervals by $I_{2,1}$ and $I_{2,2}$. At the $k$:th step, $k\geq 2$, we have
$2^{k-1}$ intervals $I_{k,1},\ldots, I_{k,2^{k-1}}$ of length
$\ell_k=2^{-k+1}\prod_{n=1}^{k-1}(1-\alpha_n)$ and we remove an
interval of length $\alpha_{k}\ell_k$ from the middle of each
$I_{k,i}$. 
Finally, the \textit{middle interval Cantor set $C=C(\alpha_n)$} is
defined by 
\[C=\bigcap_{k\in\N}\bigcup_{i=1}^{2^k}I_{k,i}.\] 

The theorem below follows by combining results of Wu \cite[Theorem 1]{wu2},
Staples and Ward \cite[Theorem 1.4]{sw}, 
and Buckley, Hanson, and MacManus \cite[Theorem 0.3]{bhm}. For
$0<p<\infty$, we denote by $\ell^p$ the set of all sequences $(\alpha_n)_{n=1}^\infty$,
$0<\alpha_n<1$, 
for which $\sum_{n=1}^\infty\alpha_{n}^p<\infty$.
\begin{thm}\label{thm:intro}
Let $C=C(\alpha_n)$. Then
\begin{enumerate}
\item\label{symmthm1} 
$C$ is thin if and only if $(\alpha_n)\notin\bigcup_{0<p<\infty}\ell^p$.
\item\label{symmthm2} 
$C$ is fat if and only if
  $(\alpha_n)\in\bigcap_{0<p<\infty}\ell^p$. 
\end{enumerate}
\end{thm}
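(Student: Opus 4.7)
The plan is to split each of (\ref{symmthm1}) and (\ref{symmthm2}) into two implications and to handle one direction of each by a direct doubling-measure estimate, appealing to the cited papers for the other direction, which requires an explicit construction of a doubling measure tailored to $(\alpha_n)$.

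The key preliminary is a quantitative form of doubling on $\R$: for every doubling measure $\mu$ there exist $C\geq 1$ and exponents $0<s\leq t$, depending only on the doubling constant of $\mu$, so that
\[C^{-1}\left(\frac{|I|}{|J|}\right)^{t}\leq\frac{\mu(I)}{\mu(J)}\leq C\left(\frac{|I|}{|J|}\right)^{s}\]
for every pair of concentric intervals $I\subset J\subset\R$. The right-hand inequality is an iteration of the doubling condition. The left-hand (``reverse doubling'') inequality uses the special geometry of $\R$: the annulus $B(x,2r)\setminus B(x,r)$ contains a ball of radius $r/2$, so a definite fraction of $\mu(B(x,2r))$ lies outside $B(x,r)$, which forces $\mu(B(x,2r))\geq(1+c)\mu(B(x,r))$ for a constant $c>0$, and iteration gives the claimed power bound.

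Applied to the concentric pair $M_{k,i}\subset I_{k,i}$ (which has ratio $\alpha_k$) and summed over $i$, this gives, using that doubling measures on $\R$ are atomless so $\mu(I_{k,i})=\mu(I_{k+1,2i-1})+\mu(M_{k,i})+\mu(I_{k+1,2i})$,
\[\mu([0,1])\prod_{k=1}^{\infty}\bigl(1-C\alpha_{k}^{s}\bigr)\leq\mu(C)\leq\mu([0,1])\prod_{k=1}^{\infty}\bigl(1-C^{-1}\alpha_{k}^{t}\bigr).\]
If $(\alpha_n)\notin\bigcup_{p}\ell^{p}$, then $\sum_k\alpha_k^{t}=\infty$ for the particular $t$ coming from $\mu$, the right-hand product vanishes and $\mu(C)=0$; this is the ``if'' direction of (\ref{symmthm1}). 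If $(\alpha_n)\in\bigcap_{p}\ell^{p}$, then $\sum_k\alpha_k^{s}<\infty$, the left-hand product is positive and $\mu(C)>0$; this is the ``if'' direction of (\ref{symmthm2}).

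The two remaining converses require one to \emph{construct} doubling measures: if $(\alpha_n)\in\ell^{p}$ for some $p$, a doubling $\mu$ with $\mu(C)>0$ (witnessing that $C$ is not thin); if $(\alpha_n)\notin\ell^{p}$ for some $p$, a doubling $\mu$ with $\mu(C)=0$ (witnessing that $C$ is not fat). A natural recipe is to build $\mu$ inductively by assigning total mass $a_{k}$ to each surviving $I_{k,i}$ and total mass $b_{k}\asymp\alpha_{k}^{\sigma}a_{k}$ to each removed $M_{k,i}$ for an exponent $\sigma=\sigma(p)$ calibrated to $p$, with the residual mass on $C$ controlled by $\prod_k(1-2^{k-1}b_{k}/a_1)$-type products that converge or diverge according to $\sum_k\alpha_k^{\sigma}$. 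The main obstacle, and where \cite{wu2}, \cite{sw} and \cite{bhm} do the delicate work, is to verify that such a measure is doubling with uniform constant at \emph{all} scales, in particular at scales comparable to $\alpha_{k}\ell_{k}$ near the endpoints of the removed intervals, where one has to reconcile a ``Cantor-type'' regime inside $C$ with a Lebesgue-type regime on the complementary intervals.
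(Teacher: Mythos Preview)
Your proposal is correct and matches the paper's treatment: the paper does not give its own proof of Theorem~\ref{thm:intro} but simply attributes all four implications to \cite{wu2}, \cite{sw}, and \cite{bhm}, exactly as you do for the two constructive directions, while your direct argument for the universal directions via the doubling and reverse-doubling power bounds on the concentric pair $M_{k,i}\subset I_{k,i}$ is the standard one underlying those references. One small technical point: in the lower product $\prod_k(1-C\alpha_k^{s})$ some early factors may be nonpositive when $C\alpha_k^{s}\geq 1$; the clean fix is to start at a level $N$ with $C\alpha_k^{s}<1$ for $k\geq N$ (possible since $\sum_k\alpha_k^{s}<\infty$ forces $\alpha_k\to 0$) and use that $\mu\bigl(\bigcup_i I_{N,i}\bigr)>0$. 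It is worth noting that the paper's own contribution, Theorem~\ref{thm:main}, supplies a new self-contained construction of the doubling measures required for the two converses---and for the much wider class of nice Cantor sets---so within this paper one can avoid invoking \cite{bhm} altogether.
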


In a recent paper, Han, Wang, and Wen \cite{hww} generalised Theorem
\ref{thm:intro} for a broader collection of (still very symmetrtic)
Cantor sets. 
Related results on thin and fat sets may be found in \cite{wu2},
\cite{kw}, \cite{wu}, \cite{sw}, \cite{bhm}, 
\cite{www}, and \cite{ors}.

The known proofs for Theorem \ref{thm:intro} and its generalisation in
\cite{hww} rely heavily on the symmetries of the sets
$C(\alpha_n)$. In this paper, we wish to consider analogues of Theorem \ref{thm:intro} for
Cantor sets with much less symmetry. 
To be more precise, we introduce the following notation.
Suppose that for each $n\in\N$, we have a collection of closed
intervals $\mathcal{I}_n=\{I_{n,i}\}_i$ with mutually disjoint
interiors and open intervals $\mathcal{J}_n=\{J_{n,i}\subset I_{n,i}\}$ such that each $I_{n+1,i}$ is a subset of some $I_{n,j}$, 
$\bigcup\mathcal{I}_{n+1}=\bigcup\mathcal{I}_n\setminus\bigcup\mathcal{J}_n$ and
that $\sup_{j}|I_{n,j}|\rightarrow0$ as $n\rightarrow\infty$. 
We also assume that
$\bigcup\mathcal{I}_1$ is bounded. 
We refer to $\{\mathcal{I}_n, \mathcal{J}_n\}_n$ as a 
\emph{Cantor construction}. The
resulting \emph{Cantor set} is given by
\[C=C_{{\{\mathcal{I}_n,\mathcal{J}_n\}}}=\bigcap_{n}\bigcup_{i}I_{n,i}.\] 
Given the collections $\mathcal{I}_n$ and $\mathcal{J}_n$ as above, we also
denote $\mathcal{I}=\bigcup_{n}\mathcal{I}_n$ and
$\mathcal{J}=\bigcup_{n}\mathcal{J}_n$. 
If there exists $0<c<1$ so that $c I_{n,i}\bigcap J_{n,i}\neq\emptyset$ for
all $I_{n,i}$, we say that our Cantor construction (and set) is
\emph{nice}\footnote{Geometrically, this only means that if the
  removed holes $J_{n,i}$ are small, then they cannot lie too close to
the boundary of $I_{n,i}$.}. Here $c I_{n,i}$ denotes the interval concentric with $I_{n,i}$ and with length $c|I_{n,i}|$.
Furthermore, given a sequence $0<\alpha_n<1$, we say
that the Cantor set $C=C_{{\{\mathcal{I}_n,\mathcal{J}_n\}}_n}$ is
\emph{$(\alpha_n)$-porous} if $|J_{n,i}|\geq\alpha_n |I_{n,i}|$ for
all $I_{n,i}\in\mathcal{I}_n$ and \emph{$(\alpha_n)$-thick}, if
$|J_{n,i}|\leq\alpha_n |I_{n,i}|$
for all $I_{n,i}$. Finally, $C$ is called 
\emph{$(\alpha_n)$-regular} if $\lambda\alpha_n|I_{n,i}|\le |J_{n,i}|\le\Lambda\alpha_n
|I_{n,i}|$ for all $I_{n,i}$ (here $0<\lambda\le\Lambda<\infty$ are
constants that do not depend on $n$ nor $i$). We underline that these definitions do not refer only to the set $C$ but also to the construction of $C$ via $\{\mathcal{I}_n,\mathcal{J}_n\}_n$.

\begin{rems}
\emph{a)} Using our notation, it is possible that a Cantor set $C$ contains
isolated points as some of the intervals $I_{n,i}$ could be
degenerated. 
We allow this for technical reasons although in most
interesting cases, e.g if $C$ is nice, the set $C$ is a
true Cantor set in the sense that it has
no isolated points.\\
\emph{b)} Observe that in our definitions, we do not impose any
conditions on the number or relative size of the intervals
$I_{n+1,j}\subset I_{n,i}$. Note also that
$I_{n+1,i}\in\mathcal{I}_{n+1}$ does not have 
to be a component of any $I_{n,j}\setminus J_{n,j}$.\\ 
\emph{c)} We formulate our results for Cantor sets, but it is reasonable to speak
about $(\alpha_n)$-porosity and  
$(\alpha_n)$-thickness for general subsets of $\R$ and not only for
the ones obtained from Cantor constructions. Roughly speaking,
$A\subset\R$ is $(\alpha_n)$-porous if it is contained in an
$(\alpha_n)$-porous Cantor set and $(\alpha_n)$-thick, if it contains
an $(\alpha_n)$-thick Cantor sets. See \cite{wu}, and \cite{sw} for
more details. In Section \ref{sec:mpor} we provide a notion of
$(\alpha_n)$-porosity which is useful in any metric space.
\end{rems}

Our main result concerning doubling measures and Cantor sets is the
following theorem. 

\begin{thm}\label{thm:main}
Suppose that $C=C_{\{\mathcal{I}_n, \mathcal{J}_n\}}$ is a nice
Cantor set. Then, for each $0<p<\infty$, there is
$\mu\in\mathcal{D}(\R)$ and $0<\lambda\le\Lambda<\infty$ so that
\begin{align}\label{arvio}
\lambda\left(\frac{|J_{n,i}|}{|I_{n,i}|}\right)^p\le\frac{\mu(J_{n,i})}{\mu(I_{n,i})}\le\Lambda\left(\frac{|J_{n,i}|}{|I_{n,i}|}\right)^p .
\end{align}
for each $I_{n,i}$.
\end{thm}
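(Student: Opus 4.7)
The plan is to realise $\mu$ as the push-forward of Lebesgue measure by an increasing homeomorphism $g\colon\R\to\R$: setting $\mu(A):=|g(A)|$ gives $\mu(I_{n,i})=|g(I_{n,i})|$ and $\mu(J_{n,i})=|g(J_{n,i})|$, so estimate \eqref{arvio} reduces to arranging that the image lengths satisfy
\[
\frac{|g(J_{n,i})|}{|g(I_{n,i})|}=\beta_{n,i}^p,\qquad\beta_{n,i}:=\frac{|J_{n,i}|}{|I_{n,i}|},
\]
which would actually yield $\lambda=\Lambda=1$. Since $\mu$ is doubling on $\R$ if and only if $g$ is quasisymmetric (the standard correspondence between doubling Borel measures and quasisymmetric homeomorphisms of $\R$), the entire task is to construct such a quasisymmetric $g$.

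I would define $g$ inductively along the construction tree. Fix any bounded $\tilde I_{1,1}:=g(I_{1,1})$; given $\tilde I_{n,i}:=g(I_{n,i})$, place $\tilde J_{n,i}:=g(J_{n,i})$ inside $\tilde I_{n,i}$ with length $\beta_{n,i}^p\,|\tilde I_{n,i}|$ in the same relative position that $J_{n,i}$ occupies in $I_{n,i}$, and assign each $\tilde I_{n+1,k}:=g(I_{n+1,k})$ a length proportional to $|I_{n+1,k}|$ within the appropriate component of $\tilde I_{n,i}\setminus\tilde J_{n,i}$. On each hole $J_{n,i}$ take $g$ affine, and extend $g$ outside $I_{1,1}$ by a translation.

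The main obstacle is verifying quasisymmetry of $g$. If $g$ is also taken affine on every child $I_{n+1,k}$, the ratio of the slopes of $g|_{J_{n,i}}$ and of $g$ on an adjacent child equals
\[
\frac{\beta_{n,i}^{p-1}(1-\beta_{n,i})}{1-\beta_{n,i}^p},
\]
which is unbounded as $\beta_{n,i}\to 0$ whenever $p\ne 1$. The remedy is to allow $g$ to have \emph{variable} slope in a neighbourhood of each $\partial J_{n,i}$: near $\partial J_{n,i}$ the slope of $g$ matches that of $g|_{J_{n,i}}$ and is then smoothly modulated, e.g.\ via a geometric-shell decomposition of the adjacent child in which consecutive shells have slope ratio bounded by a constant depending only on $p$, until after a distance comparable to $|I_{n,i}|$ the slope reaches the ambient value. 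The niceness condition is essential here: it yields $|L|,|R|\ge(1-c)|I_{n,i}|/4$ whenever $\beta_{n,i}<(1-c)/4$ (where $L,R$ are the two components of $I_{n,i}\setminus J_{n,i}$), providing the geometric margin required for the interpolation; when instead $\beta_{n,i}\ge(1-c)/4$, the naive affine choice already gives bounded slope ratios.

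Quasisymmetry of $g$ is then checked by a case analysis on the triple $(x-t,x,x+t)$: locate the smallest $I_{n,i}$ containing these three points and estimate $g(x+t)-g(x)$ and $g(x)-g(x-t)$ from the slope data on the relevant sub-pieces. Bounded slope ratios on adjacent pieces at every level, combined with the fact that the slope ratio between $g$ on a parent $I_{n,i}$ and on its children is $(1-\beta_{n,i}^p)/(1-\beta_{n,i})\in[\min(1,p),\max(1,p)]$, yield a quasisymmetry constant depending only on $p$ and the niceness parameter $c$.
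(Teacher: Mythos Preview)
Your approach via a quasisymmetric map $g$ is a natural dual to the paper's direct mass distribution, but as written the construction has a genuine gap. The two steps you describe are in tension: (a) assigning each child $\tilde I_{n+1,k}$ a length \emph{proportional} to $|I_{n+1,k}|$ inside the component $L$ of $I_{n,i}\setminus J_{n,i}$ forces $g$ to be essentially affine on $L$; (b) the slope interpolation near $\partial J_{n,i}$ makes $g$ decidedly non-affine on $L$, with slope varying by the unbounded factor $\beta_{n,i}^{\,1-p}$. If you adopt (b), then the image lengths of the children depend on which geometric shell of $L$ they happen to fall into, and the recursion at level $n+1$ begins from a $g(I_{n+1,k})$ whose ambient slope is not constant. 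You do not say how the level-$(n+1)$ interpolation zones interact with the level-$n$ shells, and this is not a detail one can defer: the Cantor construction imposes \emph{no} bound on the number or relative sizes of the children $I_{n+1,k}\subset I_{n,i}$ (niceness constrains only the position of $J_{n,i}$, not the structure of $\mathcal I_{n+1}$), so the child adjacent to $J_{n,i}$ can be arbitrarily short relative to $|I_{n,i}|$, and the ``smallest $I_{n,i}$ containing $(x-t,x,x+t)$'' in your case analysis may have uncontrolled combinatorics between the three points.

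The paper sidesteps this by abandoning the tree $\{I_{n,i}\}$ when constructing $\mu$. It builds an auxiliary hierarchy of \emph{construction intervals}: starting from $[0,1]$, one selects a gap $J\in\mathcal J$ meeting each dyadic shell $[2^{-k},2^{-k+1}]$ (and symmetrically near the right endpoint), together with the gaps $J_{m,i}$ whose $I_{m,i}$ touches $\partial[0,1]$; the complementary intervals are the level-one construction intervals, and one iterates. The point is that inside any construction interval $K$, at distance $\ge\eta|K|$ from $\partial K$ there are only boundedly many sub-construction intervals and gaps (this uses both the dyadic selection and niceness), so mass can be distributed by the rule $\mu(U)\approx(|U|/|K|)^p\mu(K)$ with uniform constants. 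The estimate \eqref{arvio} then follows because each $J_{n,i}$ is a gap of the smallest construction interval $K\supset I_{n,i}$ or of one of its immediate sub-construction intervals, and doubling is checked via the same bounded combinatorics. This passage to a controlled auxiliary hierarchy is exactly the idea your outline is missing; without it, the quasisymmetry verification is the whole difficulty, not a routine case check.
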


\begin{rem}
This result is interesting already for the middle
interval Cantor sets $C(\alpha_n)$. After the submission of this paper, we were
informed that for uniform Cantor sets, the result has
been proved independently by Peng and Wen. See \cite{pw} for the
precise formulation of their result.
\end{rem}  

Let us now discuss what can be said about the validity of Theorem
\ref{thm:intro} for the general Cantor sets $C_{\{\mathcal{I}_n,\mathcal{J}_n\}}$. 
Observe that Theorem \ref{thm:intro} includes the following four statements: 
\begin{enumerate}[(I)]
\item\label{a} If $(\alpha_n)\notin\bigcup_{0<p<\infty}\ell^p$, 
then $\mu(C)=0$ for all $\mu\in\mathcal{D}(\R)$.
\item If $(\alpha_n)\in\bigcup_{0<p<\infty}\ell^p$, then there
 is\label{d} 
$\mu\in\mathcal{D}(\R)$ with $\mu(C)>0$.
\item\label{b}
If $(\alpha_n)\in\bigcap_{0<p<\infty}\ell^p$, then 
$\mu(C)>0$ for all $\mu\in\mathcal{D}(\R)$.
\item\label{c}
If $(\alpha_n)\notin\bigcap_{0<p<\infty}\ell^p$, then there is
$\mu\in\mathcal{D}(\R)$ so that $\mu(C)=0$.
\end{enumerate}
The Claim \eqref{a} holds for general $(\alpha_n)$-porous sets
$A\subset\R$ as shown by Wu
\cite[Theorem 1]{wu}. In fact, her result remains
true in all metric spaces. 
We provide a simple proof in Lemma \ref{wuprop}.
The Claim \eqref{b} is a special case of a more general result of
Staples and Ward \cite[Theorem 1.4]{sw}. They proved that if $C\subset\R$ is
$(\alpha_n)$-thick for some $(\alpha_n)\in\bigcap_{0<p<\infty}\ell^p$,
then $C$ is fat. 

Our new results in Section \ref{sec:main} deal with the Claims
\eqref{c} and \eqref{d}. These are the claims whose earlier proofs rely
on the symmetries of $C(\alpha_n)$. We 
show that if  
$\sum_{n=1}^\infty\alpha_{n}^p=\infty$ for some $p>0$, and $C$ is
a nice $(\alpha_n)$-porous Cantor set, then
$C\notin\mathcal{F}$. 
On the other hand, if
there is $p<\infty$ with $\sum_{n=1}^\infty\alpha_{n}^p<\infty$,
and if $C$ is a nice 
$(\alpha_n)$-thick Cantor set, then $C\notin\mathcal{T}$. Putting all
these results together, we arrive at a complete analogue of Theorem
\ref{thm:intro} 
for nice $(\alpha_n)$-regular Cantor sets $C\subset\R$.
The proofs of our results in Section \ref{sec:main} are all based on Theorem
\ref{thm:main}.

In the last part of the paper in Section \ref{atomic}, we discuss
\emph{purely atomic} 
doubling measures. Recall that a measure $\mu$ on a metric space $X$ is called
\emph{purely atomic}, if there is a 
countable set $F\subset X$ so that $\mu(X\setminus F)=0$. Purely
atomic doubling measures have reached some attention recently, see e.g.
\cite{kw}, \cite{wu}, \cite{lww}, and \cite{www}. 

Denote by $F_X$ the set of isolated points of a metric space $X$ and let
$E_X=X\setminus F_X$. If $E_X$ is nowhere dense, it is reasonable to
ask if there are purely atomic doubling measures on $X$ and on the other hand,
what conditions guarantee that all doubling measures on $X$ are purely atomic.
We will
treat these questions for a class of metric spaces obtained by adding the
midpoints of the intervals $J\in\mathcal{J}$ to the Cantor sets
$C=C_{\{\mathcal{I}_n,\mathcal{J}_n\}}$. If $C$ is $(\alpha_n)$-regular,
we will classify in terms of the sequence $(\alpha_n)$, which of the
corresponding midpoint sets carry purely 
atomic doubling measures. We find a characterisation of the same nature for all doubling
measures being purely atomic.
The result, Theorem \ref{thm:atomic}, is analogous to Theorem
\ref{thm:intro}. We will 
also answer two questions on atomic doubling measures posed by Kaufman
and Wu \cite{kw}, and Lou, Wen, and Wu \cite{lww}.

We finish this section with some notation.
By a measure on (a metric space) $X$, we always mean a Borel regular
outer measure, defined 
on all subsets of $X$. If $A\subset X$, we denote by $\mu|_A$ the
restriction of $\mu$ to $A$ given by $\mu|_A(B)=\mu(A\cap B)$ for
$B\subset X$. For an interval $I\subset\R$, we denote by $\partial I$
the set of its endpoints.
We adopt the convention that $0<c<\infty$ always denotes a
constant that only depends on parameters which should be clear from the context. Sometimes we write $c=c(a,\ldots,b)$ to emphasize that $c$
depends only on the values of $a,\ldots,b$. For notational convenience, the exact value of $c$ may vary even inside a given chain of inequalities.
Given a family of numbers $0<A_\alpha,B_\alpha<\infty$, parametrised
by $\alpha$, we denote
$A_\alpha\lesssim B_\alpha$ if there is a constant $c$ so that
$A_\alpha\leq c B_\alpha$ for all $\alpha$. By $A_\alpha\approx
B_\alpha$ we mean that $A_\alpha\lesssim
B_\alpha$ and $B_\alpha\lesssim A_\alpha$.

\section{Results for $(\alpha_n)$-porous and $(\alpha_n)$-thick sets}
\label{sec:main}

Our new results concerning $(\alpha_n)$-porous and $(\alpha_n)$-thick
Cantor sets are based on Theorem \ref{thm:main}.

\begin{thm}\label{thm:por}
Suppose that $C=C_{\{\mathcal{I}_n,\mathcal{J}_n\}}$ is nice and 
$(\alpha_n)$-porous for some
$(\alpha_n)\notin\bigcap_{0<p<\infty}\ell^p$. Then there is
$\mu\in\mathcal{D}(\R)$ with $\mu(C)=0$.  
\end{thm}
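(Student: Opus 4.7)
The plan is to use Theorem \ref{thm:main} to produce a single doubling measure with respect to which the holes $J_{n,i}$ carry a fixed proportion of mass of their parents $I_{n,i}$, and then iterate. By hypothesis there exists $p\in(0,\infty)$ with $\sum_{n=1}^\infty \alpha_n^p = \infty$. Fix such a $p$ and apply Theorem \ref{thm:main} to the construction $\{\mathcal{I}_n,\mathcal{J}_n\}$ to obtain $\mu\in\mathcal{D}(\R)$ and a constant $\lambda>0$ such that for every $I_{n,i}$,
\[
\mu(J_{n,i}) \;\ge\; \lambda\left(\frac{|J_{n,i}|}{|I_{n,i}|}\right)^{p}\mu(I_{n,i}) \;\ge\; \lambda\,\alpha_n^{p}\,\mu(I_{n,i}),
\]
where the second inequality is the $(\alpha_n)$-porosity assumption. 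Replacing $\lambda$ by $\min(\lambda,1)$ if necessary, we may assume $\lambda\alpha_n^p\le 1$ for all $n$ (which does not affect $\sum\lambda\alpha_n^p=\infty$).

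Next I set up the iteration. By the construction, $\bigcup\mathcal{I}_n = \bigcup\mathcal{I}_{n+1}\,\sqcup\,\bigcup\mathcal{J}_n$ is a disjoint decomposition, since each open hole $J_{n,i}$ lies in the interior of its parent $I_{n,i}$, and the $J_{n,i}$'s with $I_{n,i}$ varying are pairwise disjoint. Therefore
\[
\mu\Bigl(\bigcup\mathcal{J}_n\Bigr) = \sum_i \mu(J_{n,i}) \;\ge\; \lambda\alpha_n^p \sum_i \mu(I_{n,i}) \;\ge\; \lambda\alpha_n^p\,\mu\Bigl(\bigcup\mathcal{I}_n\Bigr),
\]
where the last step uses subadditivity. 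Consequently
\[
\mu\Bigl(\bigcup\mathcal{I}_{n+1}\Bigr) \;\le\; (1-\lambda\alpha_n^p)\,\mu\Bigl(\bigcup\mathcal{I}_n\Bigr),
\]
and induction gives $\mu\bigl(\bigcup\mathcal{I}_{n+1}\bigr)\le \mu\bigl(\bigcup\mathcal{I}_1\bigr)\prod_{k=1}^n(1-\lambda\alpha_k^p)$.

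Finally, since $\bigcup\mathcal{I}_1$ is bounded and $\mu$ is locally finite, $\mu(\bigcup\mathcal{I}_1)<\infty$, so by continuity of $\mu$ from above,
\[
\mu(C) \;=\; \lim_{n\to\infty}\mu\Bigl(\bigcup\mathcal{I}_n\Bigr) \;\le\; \mu\Bigl(\bigcup\mathcal{I}_1\Bigr)\prod_{k=1}^{\infty}(1-\lambda\alpha_k^p) \;=\; 0,
\]
the product vanishing precisely because $\sum_k\lambda\alpha_k^p=\infty$. This produces the desired $\mu\in\mathcal{D}(\R)$ with $\mu(C)=0$.

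There is no real obstacle once Theorem \ref{thm:main} is invoked — the entire force of the argument is already contained there, and the present theorem is essentially a bookkeeping reduction. The only points to be mildly careful about are (i) arranging $\lambda\alpha_n^p\le 1$ so that the elementary inequality $\prod(1-x_k)=0\iff\sum x_k=\infty$ for $x_k\in[0,1)$ applies, and (ii) using the disjointness of the $J_{n,i}$'s (which follows from them being open and contained in intervals with pairwise disjoint interiors) in order to turn the single-interval estimate of Theorem \ref{thm:main} into a global estimate without losing constants.
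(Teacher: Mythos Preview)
Your proof is correct and follows the same approach as the paper: invoke Theorem \ref{thm:main} with the chosen $p$ to get $\mu(J_{n,i})\gtrsim\alpha_n^p\mu(I_{n,i})$, and then iterate level by level to obtain the vanishing product $\prod_n(1-c\alpha_n^p)=0$. The paper phrases the iteration in terms of $[0,1]\setminus(\cup\mathcal{J}_1\cup\cdots\cup\cup\mathcal{J}_n)$ rather than $\bigcup\mathcal{I}_{n+1}$, but these are the same set, and your handling of the constants and disjointness is in fact slightly more explicit.
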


\begin{proof}
We may assume that $C\subset[0,1]$.
Choose $p>0$ such that $\sum_{n=1}^\infty\alpha_{n}^p=\infty$. Let $\mu$ be a
doubling measure given by Theorem \ref{thm:main}. Then
$\mu(J_{n,i})\approx\bigl(|J_{n,i}|/|I_{n,i}|\bigr)^p\mu(I_{n,i})$. As
$|J_{n,i}|\geq\alpha_n |I_{n,i}|$,
we get
\begin{equation}\label{eq:smallmass}
\mu(J_{n,i})\gtrsim\alpha_{n}^p\mu(I_{n,i}).
\end{equation}
This gives, for some $c>0$,
\begin{equation*}
\mu\Bigl([0,1]\setminus(\cup\mathcal{J}_1\bigcup\ldots\bigcup\cup\mathcal{J}_{n})\Bigr)\leq(1-c\alpha_{n}^p)\mu\Bigl([0,1]\setminus(\cup\mathcal{J}_1\bigcup\ldots\bigcup\cup\mathcal{J}_{n-1})\Bigr)
\end{equation*}
for all $n\in\N$ and consequently,
\begin{equation*}
\mu(C)=\mu\Bigl([0,1]\setminus\bigcup_{n=1}^\infty\mathcal{J}_n\Bigr)\leq\mu[0,1]\prod_{n=1}^\infty(1-c\alpha_{n}^p)=0,   
\end{equation*}
as $\sum_{n=1}^\infty\alpha_{n}^p=\infty$.
\end{proof}

\begin{thm}\label{thm:thick}
Suppose that $C=C_{\{\mathcal{I}_n,\mathcal{J}_n\}}\subset\R$ is nice
$(\alpha_n)$-thick for some
$(\alpha_n)\in\bigcup_{0<p<\infty}\ell^p$. Then there is $\mu\in\mathcal{D}(\R)$ with $\mu(C)>0$. 
\end{thm}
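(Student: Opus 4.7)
The plan is to mirror the argument of Theorem \ref{thm:por} but using the upper bound in \eqref{arvio} instead of the lower bound, combined with the $(\alpha_n)$-thickness hypothesis $|J_{n,i}|\le\alpha_n|I_{n,i}|$. Choose $p>0$ with $\sum_{n=1}^\infty \alpha_n^p<\infty$, and let $\mu\in\mathcal{D}(\R)$ and $\Lambda$ be provided by Theorem \ref{thm:main} for this $p$. Then for every $I_{n,i}$,
\[
\mu(J_{n,i})\le\Lambda\left(\frac{|J_{n,i}|}{|I_{n,i}|}\right)^{p}\mu(I_{n,i})\le\Lambda\alpha_n^{p}\mu(I_{n,i}).
\]

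Next I would exploit the disjoint-union identity $\bigcup\mathcal{I}_{n+1}=\bigcup\mathcal{I}_n\setminus\bigcup\mathcal{J}_n$ built into the definition of the Cantor construction. Summing the previous inequality over $i$ at level $n$ yields
\[
\mu\bigl(\cup\mathcal{I}_{n+1}\bigr)=\mu\bigl(\cup\mathcal{I}_n\bigr)-\mu\bigl(\cup\mathcal{J}_n\bigr)\ge\bigl(1-\Lambda\alpha_n^{p}\bigr)\mu\bigl(\cup\mathcal{I}_n\bigr).
\]
Since $\sum_n\alpha_n^p<\infty$ forces $\alpha_n\to 0$, we may fix $N$ so large that $\Lambda\alpha_n^p<1/2$ for all $n\ge N$. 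Iterating from level $N$ and passing to the limit gives
\[
\mu(C)=\lim_{M\to\infty}\mu\bigl(\cup\mathcal{I}_M\bigr)\ge\mu\bigl(\cup\mathcal{I}_N\bigr)\prod_{n=N}^\infty\bigl(1-\Lambda\alpha_n^{p}\bigr),
\]
and the infinite product is strictly positive because $\sum_{n\ge N}\alpha_n^p<\infty$.

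It remains only to check that $\mu(\cup\mathcal{I}_N)>0$. This is where niceness enters: it guarantees that $C$ is a genuine Cantor set (in particular uncountable), and certainly at level $N$ some $I_{N,i}$ is a non-degenerate interval. As $\mu$ is a doubling measure on $\R$, it assigns positive mass to every non-empty open interval, so $\mu(\cup\mathcal{I}_N)>0$ and hence $\mu(C)>0$.

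I do not anticipate a genuine obstacle: the structure of the proof is completely dual to that of Theorem \ref{thm:por}, with the roles of the bounds in \eqref{arvio} and of $(\alpha_n)$-porosity versus $(\alpha_n)$-thickness interchanged. The only minor technical nuisance is ensuring the comparison product $\prod(1-\Lambda\alpha_n^p)$ is meaningful and positive, handled by truncating at level $N$ above; and verifying that $\mu$ assigns positive mass to the relevant starting intervals, handled by niceness together with the doubling property of $\mu$.
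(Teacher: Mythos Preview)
Your proof is correct and follows exactly the approach the paper takes: the paper's own proof merely says the argument is dual to that of Theorem~\ref{thm:por}, replacing \eqref{eq:smallmass} by $\mu(J_{n,i})\lesssim\alpha_n^p\mu(I_{n,i})$, and you have filled in precisely those details. The extra care you take with the truncation at level $N$ and with the positivity of $\mu(\cup\mathcal{I}_N)$ is appropriate and does not deviate from the intended argument.
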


\begin{proof}
The proof is very similar to the proof of Theorem \ref{thm:por} and
thus we skip the details. The estimate \eqref{eq:smallmass} gets
replaced by $\mu(J_{n,i})\lesssim\alpha_{n}^p\mu(I_{n,i})$ and this
leads to $\mu(C)>0$ when $\sum_{n=1}^\infty\alpha_{n}^p<\infty$. 
\end{proof}

Putting together Theorems \ref{thm:por} and \ref{thm:thick}, and the
results of Wu \cite[Theorem 1]{wu}, and Staples
and Ward \cite[Theorem 1.4]{sw} mentioned earlier,
we get the following classification 
for the thinness and fatness of nice $(\alpha_n)$-regular Cantor sets.   

\begin{cor}\label{thm:reg}
If $C\subset\R$ is a nice $(\alpha_n)$-regular Cantor set, then 
\begin{enumerate}
\item
$C$ is thin if and only if $(\alpha_n)\notin\bigcup_{0<p<\infty}\ell^p$.
\item 
$C$ is fat if and only if $(\alpha_n)\in\bigcap_{0<p<\infty}\ell^p$.
\end{enumerate}
\end{cor}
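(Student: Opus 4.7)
\medskip
\noindent\textbf{Proof plan for Corollary \ref{thm:reg}.} The plan is to decompose the statement into four one-directional implications and match each to a result already at hand: Wu's porosity theorem \cite[Theorem 1]{wu} (recovered as Lemma \ref{wuprop}), the Staples--Ward thickness theorem \cite[Theorem 1.4]{sw}, and the two new Theorems \ref{thm:por} and \ref{thm:thick}. The only bridge to construct is that these four results are stated for $(\alpha_n)$-porous or $(\alpha_n)$-thick Cantor sets, while $C$ is $(\alpha_n)$-regular with constants $0<\lambda\le\Lambda<\infty$.

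First I would record the elementary observation that, by the very definition of regularity, $C$ is simultaneously $(\lambda\alpha_n)$-porous and $(\Lambda\alpha_n)$-thick, where if necessary $\Lambda\alpha_n$ is truncated to $\min(\Lambda\alpha_n,\tfrac12)$ so that the parameter sequence lies in $(0,1)$. This truncation affects only finitely many indices once $\alpha_n\to 0$, and $\alpha_n\to 0$ is automatic in every regime where thickness is actually invoked below, namely when $(\alpha_n)\in\bigcup_{0<p<\infty}\ell^p$. Moreover, positive rescaling of a sequence preserves membership in each $\ell^p$, hence in both $\bigcup_{0<p<\infty}\ell^p$ and $\bigcap_{0<p<\infty}\ell^p$, so the hypotheses of the corollary transfer verbatim between $(\alpha_n)$, $(\lambda\alpha_n)$, and $(\Lambda\alpha_n)$.

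For part (1): if $(\alpha_n)\notin\bigcup_{0<p<\infty}\ell^p$, Lemma \ref{wuprop} applied to the $(\lambda\alpha_n)$-porous set $C$ gives $\mu(C)=0$ for every $\mu\in\mathcal{D}(\R)$, so $C$ is thin; conversely, if $(\alpha_n)\in\bigcup_{0<p<\infty}\ell^p$, Theorem \ref{thm:thick} applied to the nice $(\Lambda\alpha_n)$-thick $C$ produces some $\mu\in\mathcal{D}(\R)$ with $\mu(C)>0$, so $C$ is not thin. Part (2) is dual: if $(\alpha_n)\in\bigcap_{0<p<\infty}\ell^p$, the Staples--Ward theorem applied to the $(\Lambda\alpha_n)$-thick $C$ gives $\mu(C)>0$ for every $\mu\in\mathcal{D}(\R)$, so $C$ is fat; and if $(\alpha_n)\notin\bigcap_{0<p<\infty}\ell^p$, Theorem \ref{thm:por} applied to the nice $(\lambda\alpha_n)$-porous $C$ supplies some $\mu\in\mathcal{D}(\R)$ with $\mu(C)=0$, so $C$ is not fat.

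I do not expect a substantive obstacle; the corollary is really the conjunction of the four cited results, and the main thing to verify is that each of their hypotheses genuinely matches (in particular the niceness of $C$ is needed for Theorems \ref{thm:por} and \ref{thm:thick}, but not for Wu's lemma, so it is inherited directly). The only mildly delicate point is the truncation of $\Lambda\alpha_n$ into $(0,1)$, and as noted above it is harmless in every regime where it is applied.
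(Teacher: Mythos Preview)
Your proposal is correct and follows essentially the same approach as the paper: the authors simply state that the corollary is obtained by ``putting together Theorems \ref{thm:por} and \ref{thm:thick}, and the results of Wu \cite[Theorem 1]{wu}, and Staples and Ward \cite[Theorem 1.4]{sw}.'' You have supplied more detail than they do, in particular the explicit passage from $(\alpha_n)$-regularity to $(\lambda\alpha_n)$-porosity and $(\Lambda\alpha_n)$-thickness and the observation that $\ell^p$ membership is scale-invariant, which the paper leaves implicit.
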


\section{Proof of Theorem \ref{thm:main}}\label{proof}

We begin with the following simple lemma.

\begin{lemma}\label{lemma:regdoubl}
For all $0<p<\infty$ there is $c=c(p)<\infty$ and a doubling
measure $\mu$ on $[0,1]$ such that 
\[c^{-1}t^p\leq
\mu[0,t]=\mu[1-t,1]\leq c t^p\] 
for all $0<t<1$. 
\end{lemma}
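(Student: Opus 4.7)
The plan is to construct $\mu$ explicitly as an absolutely continuous measure with a piecewise power density, and then verify both the two-sided estimate and the doubling property by direct computation.

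I would set $d\mu(x) = f(x)\,dx$ on $[0,1]$ with
\[
f(x) = \begin{cases} p\,x^{p-1} & \text{if } 0 \le x \le 1/2, \\ p\,(1-x)^{p-1} & \text{if } 1/2 \le x \le 1. \end{cases}
\]
This is symmetric about $1/2$, so $\mu[0,t]=\mu[1-t,1]$ is automatic. A direct antiderivative gives $\mu[0,t]=t^p$ exactly on $[0,1/2]$ and $\mu[0,t]=2^{1-p}-(1-t)^p$ on $[1/2,1]$; using the ranges $t^p\in[2^{-p},1]$ and $(1-t)^p\in[0,2^{-p}]$ on the second interval one then reads off the two-sided estimate $c^{-1}t^p\le\mu[0,t]\le c\,t^p$ with $c=c(p)$.

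The substantive step is showing that $\mu$ is doubling with a constant depending only on $p$. I would first establish doubling for the auxiliary power measure $d\nu(x)=p\,x^{p-1}\,dx$ on $[0,\infty)$: from the closed form $\nu[a,b]=b^p-a^p$ the ratio $\nu(B(x,2r))/\nu(B(x,r))$ splits into the three cases $x\le r$, $r<x<2r$, and $x\ge 2r$, and in each case the scaling substitution $u=r/x$ reduces the quantity to an explicit continuous function on a compact subset of $(0,1]$, hence bounded in terms of $p$ alone. Since $\mu=\nu$ on $[0,1/2]$ this covers any $B(x,r)\subset[0,1/2]$, and reflective symmetry covers balls in $[1/2,1]$; for balls straddling the midpoint I would split at $1/2$ and add the two halves, using that $f$ is comparable to a positive constant in a fixed neighborhood of $1/2$ so that the overall doubling constant is preserved up to a factor depending only on $p$. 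The place that needs the most care is the regime where a ball sits near the endpoint $0$ (or $1$) at a scale comparable to its distance from the endpoint, but the power scaling of $\nu$ again reduces this to the bounded continuous function in $u=r/x$ already handled, so no blow-up occurs.
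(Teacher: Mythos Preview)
Your construction is correct and gives a valid proof of the lemma, but it differs from the paper's approach. The paper builds $\mu$ as a piecewise \emph{uniform} measure: one fixes $m$ with $2^{-mp+1}<1$, declares $\mu[0,2^{-km}]=\mu[1-2^{-km},1]=2^{-kmp}$ for each $k\ge1$, and spreads the remaining mass uniformly on each annulus $[2^{-km},2^{-(k-1)m}]$ (and symmetrically at the right end). The two-sided bound $\mu[0,t]\approx t^p$ then holds by interpolation between the geometric grid points, and doubling is immediate because the density is piecewise constant with ratio between consecutive pieces bounded by a constant depending only on $m$ and $p$.

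Your route instead takes the exact power density $f(x)=p\min(x,1-x)^{p-1}$, so that $\mu[0,t]=t^p$ holds \emph{exactly} on $[0,\tfrac12]$, and then reduces the doubling check to the classical fact that the weight $x^{p-1}$ is doubling on $[0,\infty)$. This is slightly more conceptual and yields a cleaner constant in the two-sided estimate; the trade-off is that verifying the doubling property requires the case analysis in $u=r/x$ you outline (plus a short compactness remark for large balls that straddle $\tfrac12$), whereas in the paper's piecewise-constant construction the doubling verification is essentially a one-line observation. Both arguments are short and either would serve the purpose the lemma has later in the paper, namely to distribute mass inside the gaps via \eqref{eq:mugap}.
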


\begin{proof}
We obey the following construction. Let $m$ be an integer so
large that $2^{-mp+1}<1$. Define
\[\mu[0,2^{-m}]=\mu[1-2^{-m},1]=2^{-mp}\] 
and let $\mu$ be uniformly
distributed on $[2^{-m},1-2^{-m}]$ with total measure
$1-2^{-mp+1}$. For each integer $k\geq 2$, put
\[\mu[0,2^{-km}]=\mu[1-2^{-km}]=2^{-kmp}\] 
and let $\mu$ be uniformly
distributed on the interval $[2^{-km},2^{-(k-1)m}]$
(resp. $[1-2^{-(k-1)m},1-2^{-km}]$) with total measure $2^{-(k-1)mp}-2^{-kmp}$.
It is now easy to see that $\mu$ has the required properties.
\end{proof}

We now start to prove Theorem \ref{thm:main}. 
We assume without loss of generality that $\inf C=0$ and $\sup C=1$.
Fix $0<p<\infty$ and let $\widetilde{c}<1$ be a constant so that 
\begin{equation}\label{eq:nice}
\widetilde{c} I_{n,i}\cap J_{n,i}\neq\emptyset
\end{equation}
for all $I_{n,i}\in\mathcal{I}$ (such a constant exists since the
Cantor construction is nice). From now on, in this proof, all constants of
comparability will only depend on $p$ and $\widetilde{c}$.

Let $\eta>0$
be a small constant so that 
$\eta^p<\tfrac14$.
 We start by dividing the interval $[0,1]$ into \emph{construction
   intervals}\footnote{This refers to the construction of the measure
   rather than construction of the set $C$.}
of level $1$ and \emph{gaps} of level $1$ as follows. 
For all integers $k\geq2$, we choose gaps $J,J'\in\mathcal{J}$
so that $J\cap[2^{-k},2^{-k+1}]\neq\emptyset$ and
$J'\cap[1-2^{-k+1},1-2^{-k}]\neq\emptyset$. Denote the union of all
these gaps by $\mathcal{G}^{i}_1$.
Let also
$\mathcal{G}^{b}_1=\{J_{n,i}\,:\,I_{n,i}\cap\{0,1\}\neq\emptyset\}$
and $\mathcal{G}_1=\mathcal{G}^{b}_1\cup\mathcal{G}^{i}_1$. 
Call the
elements of $\mathcal{G}_1$ gaps of level one and their complementary
intervals the construction intervals of level one. Denote the
collection of all construction intervals of level one by $\mathcal{C}_1$. 

Next we describe how the total measure $\mu[0,1]=1$ is distributed
among the construction intervals and gaps of level $1$. Denote by
$G^{1}_l$ the rightmost gap 
for which $\dist(G^{1}_l,0)<\eta$ and by $G^{1}_r$ the leftmost gap so that
$\dist(G^{1}_r,1)<\eta$. 
Let $G_1,\ldots,G_n$ be the gaps between $G^{1}_l$ and
$G^{1}_r$ and $K_1,\ldots, K_{n+1}$ the complementary intervals in
between $G^{1}_l,G_1,\ldots,G_n,G^{1}_r$.
It is possible that $G_{l}^1=G_{r}^1$ (if there is a huge
gap in the middle) and in this case, the
collection $\{G_1,\ldots G_n,K_1,\ldots K_{n+1}\}$ is considered to be
empty. 
\begin{claim}\label{lemma:N}
$n\leq c$.
\end{claim}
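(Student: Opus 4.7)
The plan is to bound $n$ by separately counting, for each of $\mathcal{G}_1^i$ and $\mathcal{G}_1^b$, how many of its members can lie strictly between $G_l^1$ and $G_r^1$; each such count will be $O(\log(1/\eta))$, and hence $n\le c(p,\widetilde{c})$. We may assume $\eta<1/2$, since otherwise no gap can lie at distance at least $\eta$ from both endpoints of $[0,1]$ and the claim is trivial with $n=0$.

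First I would observe that by the maximality of $G_l^1$ and minimality of $G_r^1$ in their defining conditions, every gap $G_j$ strictly between them satisfies $\dist(G_j,0)\ge\eta$ and $\dist(G_j,1)\ge\eta$. Thus it suffices to bound the number of gaps in $\mathcal{G}_1=\mathcal{G}_1^i\cup\mathcal{G}_1^b$ lying at distance at least $\eta$ from both $0$ and $1$.

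Next, I would treat the contribution from $\mathcal{G}_1^i$ directly from its definition. A left-side gap $J$ was chosen with $J\cap[2^{-k},2^{-k+1}]\ne\emptyset$ for some $k\ge 2$, so $J$ contains a point no larger than $2^{-k+1}$, and hence $\dist(J,0)\le 2^{-k+1}$. The requirement $\dist(J,0)\ge\eta$ therefore forces $k\le 1+\log_2(1/\eta)$, leaving at most $O(\log(1/\eta))$ possible $k$, and hence at most that many distinct gaps from $\mathcal{G}_1^i$ on the left side. The right side is handled symmetrically.

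For the gaps from $\mathcal{G}_1^b$ I would invoke niceness. Let $I_n^0\in\mathcal{I}_n$ denote the unique interval containing $0$ (which exists since $0\in C$), and let $J_{n,i^*}\subset I_n^0$ be its associated gap. Then $(I_n^0)_n$ is a nested decreasing sequence, and $I_{n+1}^0$ lies in the component of $I_n^0\setminus J_{n,i^*}$ containing $0$. Since $J_{n,i^*}$ meets $\widetilde{c} I_n^0$ by niceness, a direct computation (placing $J_{n,i^*}$ as far from the centre of $I_n^0$ as allowed) shows that both components of $I_n^0\setminus J_{n,i^*}$ have length at most $\frac{1+\widetilde{c}}{2}|I_n^0|$. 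Consequently $|I_{n+1}^0|\le\frac{1+\widetilde{c}}{2}|I_n^0|$, and $|I_n^0|$ decays geometrically in $n$. The condition $\dist(J_{n,i^*},0)\ge\eta$ forces $|I_n^0|\ge\eta$, which can hold for at most $O(\log(1/\eta))$ values of $n$; the count near $1$ is analogous. This last step is the main obstacle, as niceness is essential here: without it, the nested intervals containing $0$ or $1$ could stay large for arbitrarily many levels, producing arbitrarily many gaps from $\mathcal{G}_1^b$ in the middle band and defeating the bound.
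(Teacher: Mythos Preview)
Your proof is correct and follows essentially the same approach as the paper: bound separately the gaps from $\mathcal{G}_1^i$ and $\mathcal{G}_1^b$ at distance at least $\eta$ from the boundary, obtaining $O(\log(1/\eta))$ in each case, with niceness \eqref{eq:nice} driving the geometric decay for the $\mathcal{G}_1^b$ count. The paper's proof is very terse (just ``clearly'' for $\mathcal{G}_1^i$ and ``a similar estimate applies'' for $\mathcal{G}_1^b$), whereas you have spelled out the details, in particular the inequality $|I_{n+1}^0|\le\tfrac{1+\widetilde{c}}{2}|I_n^0|$ that makes the second bound work.
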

\begin{proof}[Proof of the Claim]
Clearly, there are at most $-c\log(\eta)$ gaps in
$\mathcal{G}^{i}_1$ whose distance to the boundary of $[0,1]$ is at
least $\eta$ (here and in what follows
$\log=\log_2$). Taking \eqref{eq:nice} into account, we observe that a
similar estimate applies also to the number of elements in
$\mathcal{G}^{b}_1$ whose distance to the boundary of $[0,1]$ is greater
than $\eta$.
\end{proof}

Let $K^{1}_l=[0,\dist(0,G^{1}_l)]$ and $K^{1}_r=[1-\dist(1,G^{1}_r),1]$ and define
\begin{align*}
\mu(K^{1}_l)&=|K^{1}_l|^p, \mu(K^{1}_r)=|K^{1}_r|^p\text{ and}\\
\mu(U)&
=\gamma|U|^p,\text{ for
}U\in\{G^{1}_l,G^{1}_r,G_1,\ldots,G_n,K_1,\ldots,K_{n+1}\} \text{ where}\\
\gamma&
=\frac{1-|K^{1}_l|^p-|K^{1}_r|^p}
{|G^{1}_l|^p+|G^{1}_r|^p+\sum_{i=1}^n|G_i|^p+\sum_{i=1}^{n+1}|K_i|^p}\,.
\end{align*}
(In case $G_l=G_r$, we simply let $\mu(G_l)=1-|K_l|^p-|K_r|^p$.) It follows from the Claim \ref{lemma:N} and the 
choice of $\eta$
that $\tfrac1c\leq\gamma\leq c$ for some $c>1$ and thus
$\mu(U)\approx|U|^p$. We continue distributing the mass inside $K^{1}_l$
(and $K^{1}_r$). Denote by $G_{l}^2$ the rightmost gap inside
$K^{1}_l$ with $\dist(G_{l}^2,0)<\eta|K^1_{l}|$. Define
$K^{2}_l=[0,\dist(0,G^2_{l})]$ and
$\mu(K^2_{l})=(|K^{2}_l|/|K^{1}_l|)^p\mu(K^1_{l})=|K^{2}_l|^p$.
If $U$ is one of the gaps of level $1$ between $G_{l}^2$ and $G_{l}^1$ (resp. $G_{r}^2$
and $G_{r}^1$) or one of the complementary intervals of level $1$ in between these gaps, we
put $\mu(U)=\gamma|U|^p$, where $\gamma$ is a constant defined so that the
total measure of $K^{1}_l$ (resp $K^{1}_r$) remains unchanged. A
similar argument as in the proof of Claim \ref{lemma:N} implies again
that $\gamma\approx 1$.
We continue the construction inductively inside $K^2_{l}$ (and
$K^2_{r}$) by letting $G^{3}_l$ be the rightmost gap inside
$K^{2}_l$ for which $\dist(0,G^{3}_l)<\eta|K^{2}_l|$,
$K^{3}_l=[0,\dist(0, G^{3}_l)]$,
$\mu(K^{3}_l)=(|K^{3}_l|/|K^{2}_l|)^p\mu(K^{2}_l)=|K^{3}_l|^p$ and so
on. Continuing in this 
manner, we eventually get to define the measure of each gap and
construction interval of level one. See Figure \ref{fig:construction}.
\begin{figure}
\centering
\psfrag{a}{$G^{2}_l$}
\psfrag{b}{$G^{1}_l$}
\psfrag{c}{$G^{1}_r$}
\psfrag{d}{$G^{2}_r$}
\psfrag{1}{$K_1$}
\psfrag{3}{$K_2$}
\psfrag{5}{$K_3$}
\psfrag{2}{$G_1$}
\psfrag{4}{$G_2$}
\includegraphics%[width=1.0\textwidth]
{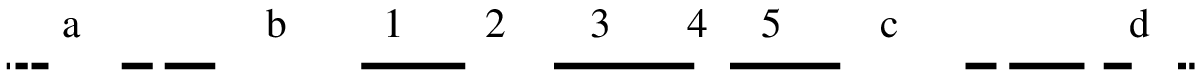}
\caption{The gaps and construction intervals of level one.}
\label{fig:construction}
\end{figure}

We proceed with the mass distribution process inside the construction
intervals of level one. For such an interval $I$, we consider
gaps
$\mathcal{G}^{b}_I=\{J_{n,i}\subset
I\,:\,I_{n,i}\cap\partial I\neq\emptyset\}$ and also let
$\mathcal{G}^{i}_I$ consist of a dyadic 
  sequence of gaps defined similarly as
  $\mathcal{G}^{i}_{[0,1]}=\mathcal{G}^{i}_1$ was defined for $I=[0,1]$. More precisely, if $I=[a,b]$, for each $k\ge 2$ we choose gaps $J,J'\in\mathcal{J}$ so that $J\cap[a+2^{-k}(b-a),a+2^{-k+1}(b-a)]\neq\emptyset$ and $J'\cap[b-2^{-k+1}(b-a), b-2^{-k}(
b-a)]\neq\emptyset$.
  Put $\mathcal{G}_I=\mathcal{G}^{i}_I\cup\mathcal{G}^{b}_I$. We call
  the elements of $\mathcal{G}_I$ the gaps of $I$. Their
  complementary intervals inside $I$ are called the sub-construction intervals
  of $I$. 
  The mass
  $\mu(I)$ is distributed for the gaps and construction intervals of
  level two inside $I$ by the same procedure as the unit mass
was distributed for the gaps and construction
  intervals of level one. The only difference is, that we replace
  $1=\mu[0,1]$ by $\mu(I)$. We repeat this process inductively for all
  construction intervals of all levels. We denote by  $\mathcal{G}_n$ the set of all gaps of level $n$ and by $\mathcal{C}_n$ the collection of construction intervals of
  level $n$. Observe that the construction intervals do not have to be
  covering intervals (i.e. members of $\mathcal{I}$). So most likely, $\mathcal{C}_n\neq\mathcal{I}_n$
  and also
  $\mathcal{G}_n\neq\mathcal{J}_n$ even though
  $\bigcup_n\mathcal{G}_n=\bigcup_{n}\mathcal{J}_n=\mathcal{J}$. Let us further
  denote
  $\mathcal{C}=\bigcup_{n=1}^\infty\mathcal{C}_n$. 

We have now defined the measure of
  all the gaps and construction intervals
and we may use a standard mass distribution principle, see
e.g. \cite[Proposition 1.7]{fa}, 
to define the measure $\mu|_C$.
Inside the gaps the measure will be distributed in the following manner: Let
$G=]a,b[\in\mathcal{J}$. Then we let $\mu|_{G}$ be a doubling measure on $G$
given by a scaled version of Lemma \ref{lemma:regdoubl} so that
\begin{equation}\label{eq:mugap}
\mu]a,a+t]=\mu[b-t,b[\approx \left(\frac{t}{|G|}\right)^p\mu(G)
\end{equation} 
for all $0<t<b-a$. By the proof of Lemma \ref{lemma:regdoubl}, this may be
done in such a way
that the doubling constant of $\mu|_G$ is independent of $G\in\mathcal{J}$. 
This completes the construction of $\mu$. To complete the proof of
Theorem \ref{thm:main}, we
have to show that $\mu$ 
is doubling and satisfies \eqref{arvio}.

Our next claim follows directly from the way $\mu$ is defined.
\begin{claim}\label{aku}
Let $K\in\mathcal{C}_n$ and $I\subset K$,
$I\in\mathcal{C}_{n+1}\cup\mathcal{G}_{n+1}$. Then
\[\mu(I)\approx\left(\frac{|I|}{|K|}\right)^p\mu(K).\] 
If $K=[a,b]$, then for all $0<t<1$,
\[\mu[a,a+t(b-a)]\approx t^p\mu(K)\approx \mu[b-t(b-a)),b]\,.\] 
\end{claim}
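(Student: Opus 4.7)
The plan is to unpack the definition of $\mu$ on $K$ and verify each assertion directly, using only that the normalizing constants appearing in the construction are comparable to $1$. By rescaling we may assume $K=[0,1]$ and $\mu(K)=1$, so the first assertion becomes $\mu(I)\approx|I|^p$ and the second becomes $\mu[0,s]\approx s^p$. Throughout I would rely on the elementary comparability $\sum_{i=1}^{m}x_i^p\approx(\sum x_i)^p$ for $m\le c$, where the constants depend only on $c$ and $p$.

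For the first part, by construction each $I\in\mathcal{C}_{n+1}\cup\mathcal{G}_{n+1}$ with $I\subset K$ carries mass $\gamma|I|^p$, where $\gamma$ is the normalizing constant of the step at which it was assigned: either the ``main step'' between $G_l^1(K)$ and $G_r^1(K)$, or the $j$-th nested step inside $K_l^{j-1}(K)\setminus K_l^j(K)$ (or its symmetric right version). To show $\gamma\approx 1$ I would estimate the numerator and denominator of
\[
\gamma=\frac{\mu(K_l^{j-1})-\mu(K_l^{j})}{\sum_U|U|^{p}}.
\]
The numerator equals $|K_l^{j-1}|^{p}-|K_l^{j}|^{p}\approx|K_l^{j-1}|^{p}$ by our choice $\eta^{p}<\tfrac14$, while the denominator is a sum of at most $c$ terms of total length $\le|K_l^{j-1}|$, at least one of them of length $\approx|K_l^{j-1}|$ (by the counting in Claim~\ref{lemma:N}), so the denominator is also $\approx|K_l^{j-1}|^{p}$.

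For the second part, set $\tau_k=|K_l^k|$ with $\tau_0=1$; by construction $\mu[0,\tau_k]=\tau_k^p$. Given $s\in(0,1)$, choose $k$ with $\tau_{k+1}<s\le\tau_k$ and split $[0,s]=K_l^{k+1}\cup(\tau_{k+1},s]$. The second piece lies in $K_l^{k}\setminus K_l^{k+1}$, which consists of $G_l^{k+1}$ followed by the intermediate region. Depending on whether $s$ lies inside $G_l^{k+1}$ or past it, I would either apply Lemma~\ref{lemma:regdoubl} to the scaled doubling measure on $G_l^{k+1}$, or split further into $G_l^{k+1}$ plus a prefix of the intermediate region containing at most $c$ whole level-$(n+1)$ items (estimated by the first part) and one possibly partial last item. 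Summing the resulting two or three pieces $\tau_{k+1}^{p}$, $|G_l^{k+1}|^{p}$, and $(s-\tau_{k+1}-|G_l^{k+1}|)^{p}$ (with the appropriate variants) and invoking the finite-sum estimate yields $\mu[0,s]\approx s^{p}$; the right-boundary statement is symmetric.

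The main obstacle is the partial last item in the previous step, when that item is a sub-construction interval $U\in\mathcal{C}_{n+1}$, since then $\mu(U\cap[0,s])$ requires the very boundary estimate asserted by the claim, now at level $n+1$. I would resolve this by a bootstrap: letting $D_p$ be the best constant for which $\mu[a,a+s]\le D_p(s/|K|)^{p}\mu(K)$ holds for all $K=[a,b]\in\mathcal{C}$ and all $s\in(0,|K|)$, the case analysis above yields an inequality $D_p\le A+BD_p$, with $A$ and $B$ depending only on $p$, $\widetilde{c}$, and $\eta$. The constant $B$ measures the weight that must be handled recursively and can be arranged to satisfy $B<1$ (by revisiting the choice of $\eta$ at the start of the construction, so that the recursive contribution is a small fraction of $\mu(K)$), giving $D_p<\infty$. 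The matching lower bound is obtained in the same way.
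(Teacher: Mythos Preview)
Your treatment of the first assertion is correct and matches the paper's intention: each sub-item of $K$ receives mass $\gamma(|I|/|K|)^p\mu(K)$ for some normalising constant $\gamma$, and the argument of Claim~\ref{lemma:N} shows $\gamma\approx 1$ at every stage.

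For the second assertion you have correctly located the only nontrivial point --- the case where $a+s$ falls inside a sub-construction interval $U=[u_1,u_2]\in\mathcal{C}_{n+1}$ --- but your proposed bootstrap does not close. Two problems: first, you have not shown $D_p<\infty$ a priori, so the inequality $D_p\le A+BD_p$ carries no information; second, and more seriously, your claim that $B<1$ can be arranged by shrinking $\eta$ is not substantiated. Unwinding the recursion, the contribution of the partial item is bounded by $D_p\,\gamma\,\bigl((a+s-u_1)/|K|\bigr)^p\mu(K)$, so effectively $B=\gamma$, where $\gamma$ is the normalising constant attached to $U$. This $\gamma$ is $\approx 1$ but may well exceed $1$, and its size is governed by $p$ and the item count $c$ from Claim~\ref{lemma:N}, not by $\eta$; making $\eta$ smaller does not force $\gamma<1$.

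The recursion is in fact unnecessary. The point you are missing is a geometric consequence of the dyadic gap family $\mathcal{G}^{i}_{K}$: since every dyadic annulus $[a+2^{-m}|K|,\,a+2^{-m+1}|K|]$ (and symmetrically on the right) meets a gap of $K$, no sub-construction interval $U\subset K$ can contain such an annulus, and hence
\[
|U|\lesssim\dist(U,\partial K).
\]
In particular, if $a+s\in U$ then $u_1-a\gtrsim s$ and $|U|\lesssim s$. Now both bounds follow directly from the first part of the claim, with no appeal to level $n+1$: for the upper bound,
\[
\mu[a,a+s]\le\mu[a,u_1]+\mu(U)\lesssim\Bigl(\tfrac{u_1-a}{|K|}\Bigr)^p\mu(K)+\Bigl(\tfrac{|U|}{|K|}\Bigr)^p\mu(K)\lesssim\Bigl(\tfrac{s}{|K|}\Bigr)^p\mu(K),
\]
since $[a,u_1]$ decomposes into $K_l^{k+1}$, $G_l^{k+1}$, and at most $c$ full level-$(n+1)$ items; for the lower bound,
\[
\mu[a,a+s]\ge\mu[a,u_1]\gtrsim\Bigl(\tfrac{u_1-a}{|K|}\Bigr)^p\mu(K)\gtrsim\Bigl(\tfrac{s}{|K|}\Bigr)^p\mu(K).
\]
This is presumably what the paper means by ``follows directly from the way $\mu$ is defined''.
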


Denote $N=\{0,1\}\cup\bigcup_{G\in\mathcal{J}}\partial G$. 
\begin{claim}\label{claim:reg}
Suppose that $I\subset[0,1]$ is an interval with $I\cap
N\neq\emptyset$ and let $K\in\mathcal{C}$ be the
shortest construction interval containing $I$. Then
\[\mu(I)\approx\left(\frac{|I|}{|K|}\right)^p\mu(K).\] 
\end{claim}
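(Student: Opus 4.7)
The plan is to decompose $I$ along the top-level structure of $K$ and add up piecewise estimates coming from Claim \ref{aku} and the gap-measure formula \eqref{eq:mugap}. Pick a point $x\in I\cap N$. If $x$ is an endpoint of $K$, then $I\subset K$ forces $I$ to be an initial segment of $K$ from that endpoint, and the conclusion is immediate from the second part of Claim \ref{aku}. Otherwise $x$ lies in the interior of $K\in\mathcal C_n$. Since $K$ is the \emph{shortest} construction interval containing $I$, the interval $I$ is not contained in any top-level sub-construction interval of $K$; and $I$ cannot lie strictly inside a top-level gap $G$ of $K$ either, because the open gap $G$ contains no points of $N$ in its interior (the only $N$-points in $\overline G$ are the endpoints of $G$, which lie outside $G$). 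Consequently $I$ straddles the boundary between adjacent top-level entities of $K$ and meets a consecutive family $E_{i_0},E_{i_0+1},\dots,E_{i_1}\in\mathcal C_{n+1}\cup\mathcal G_{n+1}$ with $i_1\geq i_0+1$; the middle ones are contained in $I$, while $I\cap E_{i_0}$ and $I\cap E_{i_1}$ are initial/terminal sub-intervals of the respective entities from endpoints of those entities.

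Set $\rho(K):=\mu(K)/|K|^p$ and let $l_k$ be the length of the $k$-th piece of the decomposition, so that $\sum_k l_k=|I|$. For the full entities $E_k\subset I$, the first part of Claim \ref{aku} gives $\mu(E_k)\approx \rho(K)|E_k|^p$. For the two partial overlaps, $I\cap E_k$ is an initial segment of $E_k$ from one of its endpoints; applying the second part of Claim \ref{aku} (if $E_k$ is a sub-construction interval) or the formula \eqref{eq:mugap} (if $E_k$ is a gap), and then invoking $\mu(E_k)\approx \rho(K)|E_k|^p$, yields $\mu(I\cap E_k)\approx \rho(K)|I\cap E_k|^p$. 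Summing gives
\[
\mu(I)\approx \rho(K)\sum_k l_k^p.
\]

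It remains to show $\sum_k l_k^p\approx |I|^p$, which is the crux of the argument and the main obstacle. By the construction of the top-level gaps and sub-construction intervals inside $K$, the top-level decomposition splits into a ``middle'' region consisting of a bounded number (depending on $\eta,\widetilde c,p$) of entities of sizes comparable to $\eta|K|$, and two dyadic ``tails'' near the endpoints of $K$ where entities at scale $k$ have size $\approx 2^{-k}|K|$. For the middle pieces the number of terms is bounded, so the elementary power-mean comparison gives $\sum l^p\approx (\sum l)^p$. For the tail pieces the sizes decay geometrically, so both $\sum l^p$ and $\sum l$ are geometric series each dominated by the largest term, whence $\sum l^p\approx (\max l)^p\approx (\sum l)^p$. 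Combining the two contributions via the elementary comparison $a^p+b^p\approx (a+b)^p$ (valid for all $p>0$) yields $\sum_k l_k^p\approx |I|^p$; here the hypothesis that $K$ is the shortest construction interval containing $I$ is what ensures that the sum is being taken at the right scale, rather than inside some smaller sub-construction interval where the dyadic analysis would need to be restarted at a finer scale. Putting everything together, $\mu(I)\approx \rho(K)|I|^p=(|I|/|K|)^p\mu(K)$, as claimed.
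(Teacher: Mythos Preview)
Your overall strategy---decompose $I$ into top-level pieces of $K$, estimate each piece via Claim \ref{aku} or \eqref{eq:mugap}, then control $\sum_k l_k^p$---is sound and does lead to the claim, but it takes a harder road than the paper. The paper splits into two cases according to whether $\dist(I,\partial K)\ge\varepsilon|I|$ for a small $\varepsilon=\varepsilon(p)$. In the ``far'' case it invokes the argument of Claim \ref{lemma:N} to conclude that $I$ meets only boundedly many top-level entities of $K$ and sums exactly as you do. In the ``near'' case (say $\dist(a,I)=\delta<\varepsilon|I|$) it bypasses any decomposition entirely, bounding $\mu(I)$ above by $\mu[a,a+2|I|]$ and below by $\mu[a,a+\delta+|I|]-\mu[a,a+\delta]$, and reading both quantities directly off the second part of Claim \ref{aku}. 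This completely sidesteps the tail-sum analysis that forms the crux of your argument.

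Your tail analysis is essentially right, but the description is a bit loose. The middle entities need not have sizes comparable to $\eta|K|$ (only their \emph{number} is bounded, which is all you actually use), and the tail is not literally a dyadic sequence with sizes $\approx 2^{-k}|K|$. Rather, the construction yields nested intervals $K_l^j$ with $|K_l^{j+1}|<\eta|K_l^j|$ and, between consecutive ones, a bounded collection of gaps and sub-intervals (cf.\ the remark after Claim \ref{lemma:N}); so the tail is a sequence of bounded-cardinality blocks whose total lengths decay at least geometrically with ratio $\eta$. That refined picture still gives $\sum_k l_k^p\approx(\sum_k l_k)^p$ for any contiguous family of pieces, so your conclusion stands---but the paper's case split reaches the same end with noticeably less bookkeeping.
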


\begin{proof}[Proof of claim \ref{claim:reg}]
Denote $K=[a,b]$ and let $c>1$ be a constant from the Claim
\ref{aku} so that
\begin{equation}\label{cdef}
\frac{t^p\mu(K)}{c|K|^p}\leq \mu]a,a+t],\mu[b-t,b[\,\leq \frac{c t^p \mu(K)}{|K|^p}
\end{equation}
for all $0<t<|K|$.
Fix $\varepsilon=\varepsilon(p)>0$ so that $\varepsilon^p\leq1/(2 c^2)$.

Assume first, that $\dist(I,\{a,b\})\geq \varepsilon|I|$. Let
$K_1,\ldots, K_n$ be the sub-construction intervals of $K$
intersecting $I$ and $G_1,\ldots, G_{m}$ ($m\in\{n-1,n,n+1\}$) the
gaps of $K$ intersecting $I$. It may happen that $U\setminus I\neq\emptyset$ for
some (but at most two) $U\in\{K_1,\ldots,K_n,
G_1,\ldots, G_m\}$. In this case, we replace $U$
by $U\cap I$ in the calculation below.
As $\dist(I,\{a,b\})\geq\varepsilon|I|$, it follows as in the proof of
Claim \ref{lemma:N}, that $n,m\leq c$. Using Claim \ref{aku} and
\eqref{eq:mugap}, it now follows that 
\begin{align*}
\mu(I)&=\sum_{i=1}^n\mu(K_i)+\sum_{j=1}^m\mu(G_j)
\approx
\sum_{i=1}^n\left(\frac{|K_i|}{|K|}\right)^p\mu(K)\\
&+\sum_{j=1}^m\left(\frac{|G_j|}{|K|}\right)^p\mu(K)\approx \left(\frac{|I|}{|K|}\right)^p\mu(K).
\end{align*}

Suppose then that $\delta=\dist(I,\{a,b\})<\varepsilon |I|$. We may
assume by symmetry, that $\dist(a,I)<\varepsilon|I|$. The claimed upper bound now
follows from Claim \ref{aku} since 
\[\mu(I)\leq\mu[a,a+2|I|]\approx
\left(\frac{2|I|}{|K|}\right)^p\mu(K).\] 
For the lower bound, we use \eqref{cdef} to obtain
\begin{align*}
\mu(I)&=\mu([a,a+\delta+|I|])-\mu([a,a+\delta])
\geq\frac{\mu(K)}{|K|^p}\left(\frac1c(\delta+|I|)^p-c\delta^p\right)\\ 
&\geq\bigl(|I|/|K|\bigr)^p\mu(K)\left(\tfrac1c-c\varepsilon^p\right)\gtrsim\bigl(|I|/|K|\bigr)^p\mu(K).
\end{align*} 
where the last estimate follows from the choice of $\varepsilon$.
\end{proof}

Now we are ready to verify \eqref{arvio}.
Fix $J=J_{n,i}\in\mathcal{J}$, and
let $K$ be the smallest construction interval containing
$I=I_{n,i}$. By Claim \ref{claim:reg}, we have 
\begin{equation}\label{eq:hassu}
\mu(I)\approx
\left(\frac{|I|}{|K|}\right)^p\mu(K).
\end{equation}
If $J$ is a gap of $K$, it follows from Claim \ref{aku} that
$\mu(J)\approx\bigl(|J|/|K|\bigr)^p\mu(K)$. Combining this with \eqref{eq:hassu},
we get
$\mu(J)/\mu(I)\approx\bigl(|J|/|I|\bigr)^p$. 
If $J$ is not a gap of $K$, we argue as follows: Since $K$ is the smallest construction
interval containing $I$, there is a gap of $K$ intersecting $I$. Thus,
if $K'$ is the sub-construction interval of $K$ containing $J$, we
have $I\cap\partial K'\neq\emptyset$ and consequently $J\in
\mathcal{G}^b_{K'}$. Now, using Claim \ref{aku}, we obtain 
\[\mu(J)\approx \left(\frac{|J|}{|K'|}\right)^p\mu(K')\approx
\left(\frac{|J|}{|K'|}\right)^p\left(\frac{|K'|}{|K|}\right)^p\mu(K)=\left(\frac{|J|}{|K|}\right)^p\mu(K)\]
and it follows
as above that 
$\mu(J)/\mu(I)\approx(|J|/|I|)^p$.
Whence, \eqref{arvio} follows.

It remains to show that 
$\mu$ is doubling on $[0,1]$. For this, it is clearly enough to show that
\begin{equation}\label{eq:adj}
\mu(I_1)\approx\mu(I_2)
\end{equation}
if $I_1$ and $I_2$ are closed sub-intervals of $[0,1]$ with
equal length and $I_1\cap I_2\neq\emptyset$. 
Let $I_1$ and $I_2$ be such intervals aligned from left to right.
If $(I_1\cup I_2)\cap N=\emptyset$, then $I_1\cup I_2\subset G$ for
some $G\in\mathcal{G}$ and
\eqref{eq:adj} follows from the way $\mu|_G$ was defined.

Suppose next that $I_1\cap N\neq\emptyset\neq I_2\cap N$.
Let $K_1$ (resp. $K_2$) be the smallest construction
interval containing $I_1$ (resp. $I_2$). Then $K_1\subset K_2$ or
$K_2\subset K_1$ since $I_1\cap I_2\neq\emptyset$ and any two
construction intervals are either disjoint or within each other. We may assume
without loss of generality, that $K_1\subset K_2$.

\begin{claim}\label{levelsmall}
If $K_1\in\mathcal{C}_n$ and $K_2\in\mathcal{C}_m$, then
$n\leq m+3$.
\end{claim}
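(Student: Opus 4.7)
The plan is to prove the claim by contradiction: assume $n\geq m+4$, and extract a contradiction from the geometry forced by $I_1$ and $I_2$. Let $L_{m+1}\supsetneq L_{m+2}\supsetneq\ldots\supsetneq L_n=K_1$ denote the chain of construction intervals with $L_i\in\mathcal{C}_i$ the unique level-$i$ construction interval containing $K_1$. Since $K_2$ is the smallest construction interval containing $I_2$ and $L_{m+1}\subsetneq K_2$, we must have $I_2\not\subset L_{m+1}$. Combined with $I_1\subset K_1\subset L_{m+1}$ and $I_1\cap I_2\neq\emptyset$, this means $I_2$ straddles an endpoint of $L_{m+1}$; without loss of generality $I_2$ extends past the left endpoint $\alpha$ of $L_{m+1}$. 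A short computation using $|I_1|=|I_2|$ and $I_1\cap I_2\neq\emptyset$ then gives
\[
\dist(K_1,\alpha)\;\leq\;|I_1|\;\leq\;|K_1|.
\]

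The contradiction will follow from a complementary geometric lower bound, namely that the nested chain forces $\dist(L_n,\alpha)>|L_n|$ when $n-m\geq 4$. The key ingredient is that the dyadic choice of the gaps $G_l^k(L_i)\in\mathcal{G}^i_{L_i}$ (one gap per dyadic range near the left endpoint $\alpha_i$ of $L_i$) yields matching upper and lower bounds $|K_l^k(L_i)|\approx\eta^k|L_i|$ on the intermediate regions, and symmetrically for $K_r^k(L_i)$. Hence any sub-construction interval $L_{i+1}\in\mathcal{C}_{i+1}$ of $L_i$ satisfies
\[
|L_{i+1}|\;\leq\;(1-c\eta)\,|L_i|\qquad\text{and}\qquad\Delta_i:=\dist(L_{i+1},\partial L_i)\;\gtrsim\;\eta\,|L_{i+1}|,
\]
where for dyadic middle positions at depth $k\geq 1$ the ratio $\Delta_i/|L_{i+1}|$ is preserved by the recursive dyadic structure inside $K_l^k(L_i)$.

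Summing the offsets along the chain gives $\dist(L_n,\alpha)=\sum_{i=m+1}^{n-1}\Delta_i$. Combining the lower bound $\Delta_i\gtrsim\eta\,|L_{i+1}|$ with $|L_{i+1}|\geq|L_n|$ for $i<n$, together with the geometric decay $|L_{i+1}|/|L_i|\leq 1-c\eta$, one iterates to force $\dist(L_n,\alpha)>|L_n|$ after three strict nested inclusions, contradicting the first step.

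The main obstacle is the quantitative bookkeeping in the geometric lower bound. The dyadic-middle case at depth $k$ needs care since both $|L_{i+1}|$ and $\Delta_i$ scale like $\eta^k|L_i|$, and this must be combined with the central-middle case by using that the number of central middle sub-construction intervals of any $L_i$ is bounded by Claim~\ref{lemma:N}. The specific ``$3$'' in the bound reflects that three strict nested inclusions suffice for the accumulated offset to strictly exceed $|L_n|$, a feature intrinsic to the construction's geometric parameters $\eta$ and $\widetilde{c}$.
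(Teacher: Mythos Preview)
Your approach has a genuine gap. The central estimate $\Delta_i=\dist(L_{i+1},\partial L_i)\gtrsim\eta\,|L_{i+1}|$ is simply false: the leftmost sub-construction interval of $L_i$ shares the endpoint $\alpha_i$ with $L_i$, so $\Delta_i=0$ is possible at every step. This is precisely the configuration that matters here, since $I_1$ lies near $\alpha$ and the chain $L_{m+1}\supset L_{m+2}\supset\cdots$ can hug $\alpha$ all the way down. Your remark that ``the dyadic-middle case at depth $k$ needs care'' does not address this, and no amount of summing offsets will yield $\dist(L_n,\alpha)>|L_n|$ when every $L_i$ contains $\alpha$. There is also a confusion about the parameter $\eta$: in the construction, $\eta$ governs only the \emph{mass distribution} (the choice of $G_l^k$, $G_r^k$), not the geometry of the hierarchy $\{\mathcal{C}_n\}$. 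The gaps $\mathcal{G}^i_K$ are chosen on genuine dyadic scales $2^{-k}|K|$, so the constant $3$ cannot depend on $\eta$ or $\widetilde{c}$ as you suggest.

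The paper's proof avoids offsets entirely and tracks \emph{sizes} instead. One first locates the level-$(m{+}1)$ interval $K\subset K_2$ containing $I_1$ (so $I_2$ reaches past the right endpoint $c$ of $K$). If $I_1$ already meets a gap of $K$ we are done with $n=m{+}1$; otherwise $I_1\subset K'=[a,b]\in\mathcal{C}_{m+2}$ sits between two consecutive gaps of $K$. The dyadic choice of $\mathcal{G}^i_K$ forces $|a-c|<4|b-c|$, hence $|K'|<3|b-c|\le 3|I_2|=3|I_1|$. Since every sub-construction interval has length at most half that of its parent (again by the dyadic choice of $\mathcal{G}^i$), any level-$(m{+}4)$ interval inside $K'$ has length $<|K'|/4<\tfrac34|I_1|$ and therefore cannot contain $I_1$. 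This gives the sharp bound $n\le m+3$ with no dependence on auxiliary parameters.
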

\begin{figure}
\centering
\psfrag{a}{$a$}
\psfrag{b}{$b$}
\psfrag{c}{$c$}
\psfrag{1}{$I_1$}
\psfrag{2}{$I_2$}
\psfrag{3}{$K'$}
\psfrag{4}{$G_1$}
\psfrag{5}{$G_2$}
\definecolor{gray1}{rgb}{0.44,0.44,0.44} 
\psfrag{6}{\color{gray1}$G$}
\definecolor{gray2}{rgb}{0.65,0.65,0.65} 
\psfrag{K}{\color{gray2}$K$}
\includegraphics%[width=1.0\textwidth]
{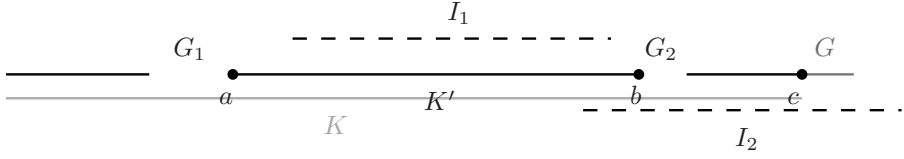}
\caption{Illustration for the proof of Claim \ref{levelsmall}.}
\label{fig:levels}
\end{figure}
\begin{proof}[Proof of claim \ref{levelsmall}]
If $K_1=K_2$, we are done, so assume $K_2\setminus K_1\neq\emptyset$.
Let $G$ be the leftmost gap of $K_2$ that intersects $I_2$ and
let $K\in\mathcal{C}_{m+1}$ be the construction interval next to
$G$ on the lefthand side. As $I_1\subset K_2$ and $G\cap
I_1=\emptyset$ (otherwise $K_1=K_2$), the intervals $K$ and $G$ are
well defined. Moreover, we have $I_1\subset K$.
Consider the collection $\mathcal{G}_K$. If
$I_1\cap\cup\mathcal{G}_K\neq\emptyset$, it follows that $K_1=K$
(i.e. $n=m+1$)
and we are done. Otherwise, there are two consecutive gaps
$G_1,G_2\in\mathcal{G}_K$ and a sub-construction interval of $K$
denoted by $K'\in\mathcal{C}_{m+2}$ in between $G_1$ and $G_2$ so that
$I_1\subset K'$. Let us denote by $a$ the right endpoint 
of $G_1$ ($=$ left endpoint of $K'$), by $b$ the left endpoint of $G_2$
($=$ right endpoint of $K'$), and by $c$ the right endpoint of $K$
($=$ left endpoint of $G$), see Figure \ref{fig:levels}. 
From the way $\mathcal{G}^{i}_{K}$ is constructed, it follows that
$|a-c|<4|b-c|$ and so 
\[|K'|=|a-b|<3|b-c|\leq 3|I_2|=3|I_1|.\] We also know, by
considering $\mathcal{G}^{i}_{K'}$ that all sub-construction intervals
of $K'$ have length at most $|K'|/2$ and similarly their
sub-construction intervals are shorter than $|K'|/4<\tfrac34|I_1|$.
Thus, $|I_1|$ cannot be contained in a construction interval of level
$m+4$ and the claim follows.
\end{proof}

The Claims
\ref{aku} and \ref{levelsmall} now imply that
$\mu(K_1)\approx\bigl(|K_1|/|K_2|\bigr)^p\mu(K_2)$. On the other hand, by Claim 
\ref{claim:reg}, we have $\mu(I_1)\approx\bigl(|I_1|/|K_1|\bigr)^p\mu(K_1)$ as
well as $\mu(I_2)\approx\bigl(|I_2|/|K_2|\bigr)^p\mu(K_2)$. Putting these
estimates together implies
\begin{align*}
\mu(I_1)&\approx\left(\frac{|I_1|}{|K_1|}\right)^p\mu(K_1)
\approx\left(\frac{|I_1|}{|K_1|}\right)^p\left(\frac{|K_1|}{|K_2|}\right)^p\mu(K_2)\\
&=\left(\frac{|I_2|}{|K_2|}\right)^p\mu(K_2)\approx\mu(I_2).
\end{align*}

Suppose finally, that only one
of the
intervals $I_1$ or $I_2$, say $I_2$, hits $N$. Then $I_1$ is a subset
of a gap $G=]a,b[$ and $\delta=\dist(I_1,b)\leq|I_1|$. Letting
$I_{3}=I_1+\delta$, we have
$\mu(I_1)\approx\mu(I_{3})$ as $\mu|_G$ is doubling. On the other
hand, since $I_{3}\cap N\neq\emptyset\neq I_{2}\cap N$, and $I_2\cap
I_3\neq\emptyset$, we already know that
$\mu(I_{3})\approx\mu(I_{2})$. 
Combining these estimates, we get $\mu(I_1)\approx\mu(I_2)$.
This completes the proof of Theorem \ref{thm:main} 
\hfill$\Box$

\smallskip

It is natural to ask if we could drop the word ``nice'' from the
assumptions in Theorems
\ref{thm:main}, \ref{thm:por}, and \ref{thm:thick} and in Corollary \ref{thm:reg}.  
The Proposition below shows (by choosing a fat Cantor set as $E$) that, at least, in Theorems \ref{thm:main}
and \ref{thm:por} this is not possible. We do not know if one could
remove this assumption from Theorem \ref{thm:thick}. 

\begin{prop}
If $E\subset\R$ is nowhere dense and $0<p<1$, then there is a Cantor set
$C\supset E$ which is
$(\alpha_n)$-porous for some $(\alpha_n)\notin\ell^p$.
\end{prop}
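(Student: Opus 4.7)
The plan is to exploit that $p<1$ permits the porosity sequence to decay polynomially: the sequence $\alpha_n=c\,n^{-1/p}$ satisfies $\sum_n\alpha_n^p=c^p\sum_n n^{-1}=\infty$, so $(\alpha_n)\notin\ell^p$ for any fixed $c>0$. I aim to construct a Cantor system $\{\mathcal{I}_n,\mathcal{J}_n\}$ with $E\subset C=\bigcap_n\bigcup_i I_{n,i}$ and $|J_{n,i}|\geq\alpha_n|I_{n,i}|$ for every $n,i$. Assume without loss of generality that $E$ is closed and contained in $[0,1]$; since $E$ is nowhere dense, $U:=[0,1]\setminus E$ is open and dense, and decomposes as a countable disjoint union $U=\bigsqcup_k G_k$ of open intervals.

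I would build $\{\mathcal{I}_n,\mathcal{J}_n\}$ by induction, maintaining at each level the invariant that every non-degenerate $I\in\mathcal{I}_n$ contains some $G_k$ with $|G_k|\geq\alpha_n|I|$, and take this $G_k$ as the associated $J_{n,I}$ (it is automatically disjoint from $E$). Starting from $I_{1,1}=[0,1]$ and $J_{1,1}$ any sufficiently long $G_k$ (adjusting finitely many initial terms of $(\alpha_n)$ if needed, which does not affect $\sum\alpha_n^p=\infty$), the inductive step at level $n+1$ requires each child $I'$ of $I\setminus J_{n,I}$ to be equipped with a fresh $G_{k'}\subset I'$ of relative length $\geq\alpha_{n+1}$. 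Arranging roughly centred splits keeps child lengths bounded by a fixed fraction of the parent, so $\sup_i|I_{n,i}|\lesssim 2^{-n}\to 0$.

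The principal obstacle is this inductive selection when $E$ is ``thick'' at the relevant scale: near a density-one point of a fat Cantor set $E$, every gap $G_k$ inside a small sub-interval can have relative length far less than $\alpha_{n+1}$. The way around this is to \emph{over-size} the intervals $I_{n,i}$, taking $|I_{n,i}|$ comparable to $|G_{k(n,i)}|/\alpha_n$ rather than tightly hugging $E\cap I_{n,i}$, so that the chosen gap $G_{k(n,i)}$ automatically has relative length about $\alpha_n$. The essential slack comes from $p<1$: since $\alpha_n=c\,n^{-1/p}$ decays only polynomially, the blown-up length $n^{1/p}|G_{k(n,i)}|$ still tends to zero provided we select $G_{k(n,i)}$ at successively finer scales of $E$. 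The main technical work, which I do not carry out here, is to verify that this over-sizing can be executed compatibly with the nesting $I_{n+1,j}\subset I_{n,i}$: after removing an interior gap $G_{k(n,i)}$ from an over-sized $I_{n,i}$ the two children still contain ample regions of $U$, from which new over-sized gaps at the next scale may be drawn.
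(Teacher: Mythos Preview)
Your outline has a genuine gap, and the ``over-sizing'' manoeuvre does not close it. In the Cantor-construction framework, once $I_{n,i}$ and $J_{n,i}$ are fixed, the level-$(n+1)$ intervals must partition $\bigcup\mathcal I_n\setminus\bigcup\mathcal J_n$; you may subdivide the components of $I_{n,i}\setminus J_{n,i}$ further, but you cannot enlarge them. So ``taking $|I_{n,i}|$ comparable to $|G_{k(n,i)}|/\alpha_n$'' is not a free choice. Concretely, if $x\in E$ is a Lebesgue density point, then for every sufficiently small interval $I\ni x$ the largest complementary gap $G_k\subset I$ satisfies $|G_k|/|I|<\alpha$, for any prescribed $\alpha>0$. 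Since some $I_{n,i}$ must contain $x$ and $\sup_i|I_{n,i}|\to 0$, your invariant ``each $I\in\mathcal I_n$ contains a whole gap $G_k$ with $|G_k|\ge\alpha_n|I|$'' eventually fails for the predetermined sequence $\alpha_n=c\,n^{-1/p}$. The difficulty is not merely technical: insisting that each $J_{n,i}$ be an \emph{entire} complementary interval of $E$, together with a universal $(\alpha_n)$ independent of $E$, is too rigid.

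The paper drops both restrictions. It does not fix $(\alpha_n)$ in advance, and it lets $J_{n,i}$ be a tiny \emph{sub}-interval of a single gap $G\subset[0,1]\setminus E$. Having found one such $G$ of length $r$ near the middle, it slices $G$ into $2M$ equal pieces of length $\delta=r/(2M)$ and removes one piece per side at each of $M$ consecutive construction levels; through these levels the covering intervals shrink only by $r/2$ in total, so one may take $\alpha_1=\cdots=\alpha_M=\delta$ and obtain
\[
\sum_{n=1}^M\alpha_n^p \;=\; M\delta^p \;=\; M^{1-p}(r/2)^p \;\ge\; 1
\]
by choosing $M$ large---this is exactly where $p<1$ is used. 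One then recurses inside each of the two remaining large pieces. The idea missing from your sketch is thus to manufacture many construction levels out of a single gap of $E$ by removing it in thin slices, rather than spending one whole complementary interval per level.
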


\begin{proof}
We construct inductively the required intervals $I_{n,i}$ and
$J_{n,i}$ that satisfy $E\subset[0,1]\setminus\bigcup_{n,i}J_{n,i}$. 

\textbf{Step 1:}
Pick any subinterval $G\subset[0,1]\setminus E$ of length
$\leq\tfrac12$ so that $G\cap[\tfrac14,\tfrac34]\neq\emptyset$ and
denote $r=|G|$. Choose a number $M_1\in\N$ so that 
\begin{equation}\label{Mlarge}
M_{1}^{1-p}(r/2)^p\geq 1.
\end{equation}
Let $J_1,\ldots J_{2M_1}$ be disjoint open sub-intervals of $G$ with length $\delta=r/(2M_1)$,
 enumerated from left to right. Define 
$\alpha_1=\alpha_2=\ldots\alpha_{M_1}=\delta$.  
From \eqref{Mlarge}, we get
$\sum_{n=1}^{M_1}\alpha_{n}^p\geq 1$.
If $a$ is the centre point of
$G$, define $\mathcal{I}_1=\{[0,a], [a,1]\}$
$\mathcal{J}_1=\{J_{M_1},J_{M_1+1}\}$, $\mathcal{I}_2=\{[0,a-\delta],
[a+\delta,1]\}$, $\mathcal{J}_2=\{J_{M_1-1}, J_{M_1+2}\}$,\ldots,
$\mathcal{I}_{M_1}=\{[0,a-r/2+\delta], [a+r/2-\delta,1]\}$,
$\mathcal{J}_{M_1}=\{J_1, J_{2M_1}\}$.  

\textbf{Step m:} Suppose that $M_1,\ldots,M_{m-1}\in\N$ as well as
the collections $\mathcal{I}_j$, $\mathcal{J}_j$ for $1\leq j\leq
\sum_{k=1}^{m-1}M_k$ have been defined.
We now perform the step 1 construction inside each
of the elements of $\mathcal{I}_{\sum_{k=1}^{m-1}M_k}$. The number $M_m$
  as well as $\alpha_n$ for $\sum_{k=1}^{m-1}M_k<
  n\leq\sum_{k=1}^{m}M_k$ will be determined according to the smallest
  relative gap chosen inside the intervals
  $I\in\mathcal{I}_{\sum_{k=1}^{m-1}M_k}$, and we choose the number $M_m$
  so large, that
\[\sum_{n=\sum_{k=1}^{m-1}M_k}^{\sum_{k=1}^{m}M_k}\alpha_{n}^p\geq
1.\]
It is now evident from the construction, that $(\alpha_n)\notin\ell^p$
and that the set $C=\bigcap_{j=1}^\infty\bigcup\mathcal{I}_j$ is
$(\alpha_n)$-porous.    
\end{proof}

\begin{rem}
To formally fulfill the requirement
$\bigcup\mathcal{I}_{n+1}=\bigcup\mathcal{I}_n\setminus\bigcup\mathcal{J}_n$ we
should add to each $\mathcal{I}_{n+1}$ the 
boundary points of the deleted intervals $J\in\mathcal{J}_{n}$
and also emptysets as their "holes" to $\mathcal{J}_{n+1}$. 
For those readers who consider this cheating, we suggest to modify the
construction so that
$\bigcup\mathcal{I}_{n+1}=\bigcup\mathcal{I}_n\setminus\bigcup\mathcal{J}_n$
holds and the resulting Cantor set
$C=C_{\{\mathcal{I}_n,\mathcal{J}_n\}_n}$ contains no isolated
points.
It is also possible to
modify the construction so that $(\alpha_n)\notin\bigcup_{0<q<1}\ell^q$. 
\end{rem}

\section{A lemma on $(\alpha_n)$-porous sets in metric spaces}
\label{sec:mpor}

For the purpose of proving results for midpoint Cantor sets in
Section \ref{atomic}, 
we present here a metric space version of Wu's result on
$(\alpha_n)$-porous sets being 
null for all doubling measures if
$(\alpha_n)\notin\bigcup_{0<p<\infty}\ell^p$. Her argument to prove the
result in $\R$ readily works in much general situations once we find a
reasonable definition of $(\alpha_n)$-porosity to use.
There are basically two
options: If one wants that the covering collection consists of
distinct elements, then one has to use more general covering objects
than just balls or intervals. The second option, which is more useful for us, is
to relax the disjointness condition a bit and still keep using coverings
with balls. For an analogous result using the first mentioned option, see
\cite[Theorem 4.9]{l}.

We say that a subset $E\subset X$ of a metric space $X$ is 
\textit{$(\alpha_n)$-porous} for a sequence $(\alpha_n)_{n=1}^\infty$, 
$0<\alpha_n<1$, if 
there is a constant $N\in\N$ and a sequence of (finite or countably
infinite) coverings
$\mathcal{B}_n=\{B_{n,j}\}_i$ of $E$ by balls
$B_{n,j}=B(x_{n,j},r_{n,j})$ with the following properties: 
\begin{enumerate}[(P1)]
\item\label{ap1}Each 
$B_{n,j}$ contains a sub ball $B'_{n,j}=B(y_{n,j},\alpha_n r_{n,j})\subset B_{n,j}\setminus E$. 
\item\label{ap2} 
Each point $x\in X$ belongs to at most $N$ different balls $B'_{n,j}$.
\end{enumerate} 
It is clear that if $C=C_{\{\mathcal{I}_n,\mathcal{J}_n\}}\subset\R$ is
$(\alpha_n)$-porous in the sense defined in the introduction, then it is
also $(\alpha_n)$-porous in the sense of the above definition.

\begin{lemma}\label{wuprop}
Let $X$ be a metric space.
If $(\alpha_n)\notin\bigcup_{0<p<\infty}\ell^p$, and $E\subset X$ is
$(\alpha_n)$-porous, 
then $\mu(E)=0$ for all doubling measures $\mu$ on $X$.
\end{lemma}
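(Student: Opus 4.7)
The plan is to combine the overlap condition (P2) with a standard reverse-doubling lower bound. Fix a doubling measure $\mu$ on $X$ with doubling constant $c$ and set $p_0=\log_2 c$. Iterating the doubling inequality gives the pointwise mass estimate
\begin{equation*}
\mu(B(y,s)) \gtrsim (s/r)^{p_0}\mu(B(x,r))
\end{equation*}
whenever $B(x,r)$ is a ball, $y\in B(x,r)$, and $0<s\le r$, with implicit constant depending only on $c$. Applied to (P1), this yields
\begin{equation*}
\mu(B'_{n,j}) \gtrsim \alpha_n^{p_0}\mu(B_{n,j})
\end{equation*}
for every $n$ and every $j$.

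To show $\mu(E)=0$ it suffices to show $\mu(E\cap B_0)=0$ for every ball $B_0=B(z_0,R_0)$ of finite measure. Fix such a $B_0$ and restrict attention to balls $B_{n,j}$ that meet $E\cap B_0$. Passing to radii $r_{n,j}\le R_0$ (the oversized balls require separate treatment, addressed below), these are all contained in $3B_0$, so by (P2),
\begin{equation*}
\sum_{n,j}\mu(B'_{n,j})
= \int\sum_{n,j}\mathbf{1}_{B'_{n,j}}\,d\mu
\le N\,\mu\Bigl(\bigcup_{n,j}B'_{n,j}\Bigr)
\lesssim \mu(B_0) < \infty.
\end{equation*}

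Combining the two bounds above with the union-bound observation $\mu(E\cap B_0)\le \sum_j\mu(B_{n,j})$ (valid for each $n$, since the balls $B_{n,j}$ cover $E$), one obtains
\begin{equation*}
\mu(E\cap B_0)\sum_n\alpha_n^{p_0}
\;\lesssim\; \sum_n\alpha_n^{p_0}\sum_j\mu(B_{n,j})
\;\lesssim\; \sum_{n,j}\mu(B'_{n,j})
\;\lesssim\; \mu(B_0).
\end{equation*}
The hypothesis $(\alpha_n)\notin\bigcup_{0<p<\infty}\ell^p$ entails, in particular, $\sum_n\alpha_n^{p_0}=\infty$, which forces $\mu(E\cap B_0)=0$ and hence $\mu(E)=0$.

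The main obstacle is the legitimate handling of balls in the covering with $r_{n,j}>R_0$, since their holes $B'_{n,j}$ need not sit in a neighborhood of $B_0$. For any such oversized ball meeting $B_0$, one has $B_0\subset 3B_{n,j}$, so doubling gives $\mu(B_{n,j})\gtrsim \mu(B_0)$ and hence $\mu(B'_{n,j})\gtrsim \alpha_n^{p_0}\mu(B_0)$; applying (P2) to these oversized holes then caps their number at each level and shows that their contribution to the union bound can be absorbed into the implicit constant on the right-hand side of the final inequality. This bookkeeping is the technical heart of the argument but is routine, and completes the proof.
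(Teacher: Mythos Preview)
Your approach is essentially the same as the paper's: both use the doubling lower bound $\mu(B'_{n,j})\gtrsim\alpha_n^{p_0}\mu(B_{n,j})$, the covering estimate $\mu(E)\le\sum_j\mu(B_{n,j})$, and the bounded-overlap condition (P2) to bound $\sum_{n,j}\mu(B'_{n,j})$ and derive a contradiction with $\sum_n\alpha_n^{p_0}=\infty$. The only difference is in how the reduction to a bounded situation is handled. The paper simply declares, without justification, that one may assume $E$ is bounded and all $B_{n,j}$ lie in a single fixed ball $B$; after that assumption the argument is line-for-line what you wrote before your final paragraph.

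Your final paragraph, where you try to make this reduction explicit, contains a genuine slip. The assertion that ``applying (P2) to these oversized holes then caps their number at each level'' is not correct: (P2) bounds only the \emph{pointwise} overlap of the holes $B'_{n,j}$, and the holes of oversized balls can lie anywhere in $X$, pairwise disjoint and far from $B_0$, so (P2) gives no control on their count or on $\sum_{j\in J_n^l}\mu(B'_{n,j})$. Relatedly, your displayed chain of inequalities silently switches between ``all balls at level $n$'' (needed for the covering bound $\mu(E\cap B_0)\le\sum_j\mu(B_{n,j})$) and ``only the small balls'' (needed for $\sum_{n,j}\mu(B'_{n,j})\lesssim\mu(B_0)$). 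That said, this is precisely the point the paper waves away with its unproved ``without loss of generality'', so your treatment is no worse than the original---and arguably more honest in flagging where the difficulty lies. For the paper's intended applications (bounded midpoint Cantor sets $M$, where $\mu(M)<\infty$ automatically) the issue evaporates.
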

\begin{proof}

Let
$\mathcal{B}_n$ be the coverings that fulfill the
$(\alpha_n)$-porosity conditions (P\ref{ap1}) and (P\ref{ap2}) and
let $\mu$ be a doubling measure on $X$ with doubling constant $1<c<\infty$. 
Without loss of generality, we may assume that $E$
is bounded and that $B_{n,j}\subset B$ for some fixed ball $B\subset X$.
For each $n$, let $k_n$ be the smallest integer so that
$k_n\geq-\log(\alpha_n)+1$. Then 
$B_{n,i}\subset B(y_{n,j},2^{k_n}\alpha_n r_{n,j})$ %2^{k_n}B'_{n,i}$  
 for all $B_{n,i}\in\mathcal{B}_n$ and thus the doubling condition
gives
\begin{equation*}
\mu(E)\leq\sum_i\mu(B_{n,i})\leq
c^{-\log(\alpha_n)+1}\sum_i\mu(B'_{n,i})=c\,\alpha_{n}^{-p}\sum_{i}\mu(B'_{n,i}),
\end{equation*}
 where $p=\log c>0$. 
Let $\varepsilon>0$. To complete the proof it suffices to find
$n\in\N$ so that
$\sum_{i}\mu(B'_{n,i})\leq
\varepsilon\alpha_{n}^{p}$.
But if this is not the case, then
(P\ref{ap2}) yields
\[\infty>\mu(B)\geq\frac{1}{N}\sum_{n}\sum_i\mu(B'_{n,i})
>\varepsilon\sum_{n=1}^\infty\alpha_{n}^{p}=\infty\]   
giving a contradiction.
\end{proof}

\section{Purely atomic doubling measures}

\label{atomic}

\subsection{On midpoint Cantor sets}

In this subsection, we show how the theorems of Section \ref{sec:main}
can be turned to theorems on atomic doubling measures for certain class of
midpoint Cantor sets.

For each Cantor set $C=C_{\{\mathcal{I}_n,\mathcal{J}_n\}}$, we
define a
\textit{midpoint Cantor set} $M=M_{\{\mathcal{I}_n,\mathcal{J}_n\}}$ by
letting $M=C\cup_{J\in\mathcal{J}}\{x_J\}$, where $x_J$ is the centre point
of $J\in\mathcal{J}$. If $C$ is a middle interval Cantor set $C=C(\alpha_n)$, we
denote the corresponding midpoint Cantor set by $M(\alpha_n)$. We consider
each such $M$ as a metric space, with the inherited Euclidean metric. 

For these midpoint Cantor sets, we verify the following results analogous
to the results obtained for doubling measures on the real-line.
\begin{thm}\label{thm:atomic}
Suppose that $C=C_{\{\mathcal{I}_n,\mathcal{J}_n\}}$ is a Cantor set
and let
$M=M_{\{\mathcal{I}_n,\mathcal{J}_n\}}$. Then:
\begin{enumerate}
\item If $C$ is $(\alpha_n)$-porous for some
  $(\alpha_n)\notin\bigcup_{0<p<\infty}\ell^p$, then all doubling
  measures on $M$ are 
  purely atomic.\label{apor}
\newcounter{enumi_saved}
\setcounter{enumi_saved}{\value{enumi}}
\end{enumerate}
Suppose further that $C$ is nice and let $c$ be a constant so that $J_{n,i}\cap cI_{n,i}\neq\emptyset$
for all $I_{n,i}$. If
\begin{equation}\label{alphansmall}
|J_{n,i}|<\frac{1-c}{3}|I_{n,i}|\text{ for each }I_{n,i},
\end{equation}
then also the following holds:
\begin{enumerate}
\setcounter{enumi}{\value{enumi_saved}}
\item If $C$ is $(\alpha_n)$-thick for some
  $(\alpha_n)\in\bigcup_{0<p<\infty}\ell^p$, then there are doubling measures $\mu$
  on $M$ with $\mu(C)>0$.\label{athick'}
\item If $C$ is $(\alpha_n)$-thick for some
  $(\alpha_n)\in\bigcap_{0<p<\infty}\ell^p$, then there are no purely
  atomic doubling measures on $M$.\label{athick} 
\item If $C$ is $(\alpha_n)$-porous and
  $(\alpha_n)\notin\bigcap_{0<p<\infty}\ell^p$, then there are purely
  atomic doubling measures on $M$. \label{apor'} 
\item Finally, suppose that $C$ is nice and $(\alpha_n)$-regular. Then all
  doubling measures on $M$ are purely atomic if and only if
  $(\alpha_n)\notin\bigcup_{0<p<\infty}\ell^p$. There are no purely
  atomic doubling measures on $M$ if and only if
  $(\alpha_n)\in\bigcap_{0<p<\infty}\ell^p$.\label{areg} 
\end{enumerate}
\end{thm}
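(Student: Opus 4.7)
My overall strategy is to transfer doubling measures between $\R$ and $M$: collapsing the mass of each gap to its midpoint converts a doubling measure on $\R$ into a candidate measure on $M$, while redistributing each atom $\mu(\{x_J\})$ over $J$ reverses the operation. For part~(\ref{apor}) no transfer is needed: I would apply Lemma~\ref{wuprop} inside $X = M$ with $E = C$, using covering balls $B_{n,j} = B_M(y, 2|I_{n,i}|)$ with $y \in I_{n,i} \cap C$ together with sub-balls $B'_{n,j} = B_M(x_{J_{n,i}}, |J_{n,i}|/2)$. Because the open interval $J_{n,i}$ is disjoint from $C$ and from the midpoints of every other gap, $B'_{n,j}$ equals the singleton $\{x_{J_{n,i}}\}$ and hence lies in $M \setminus C$; the ratio of radii is at least $\alpha_n/4$, and condition~(P\ref{ap2}) is trivial with $N=1$. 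Lemma~\ref{wuprop} then gives $\mu(C) = 0$ for every doubling $\mu$ on $M$, so $\mu$ is supported on the countable set of midpoints and is purely atomic.

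For parts~(\ref{athick'}) and~(\ref{apor'}) I plan to invoke Theorem~\ref{thm:main} to produce a doubling measure $\mu$ on $\R$ satisfying~(\ref{arvio}) and push it to $M$ by
\begin{equation*}
\nu(A) := \mu(A \cap C) + \sum_{x_J \in A} \mu(J), \qquad A \subset M.
\end{equation*}
In case~(\ref{athick'}) the proof of Theorem~\ref{thm:thick} yields $\mu(C) > 0$, whence $\nu(C) > 0$; in case~(\ref{apor'}) the proof of Theorem~\ref{thm:por} yields $\mu(C) = 0$, so $\nu$ lives on the midpoints. The technical core is to verify that $\nu$ is doubling on $M$, for which I would establish the comparison
\begin{equation*}
\nu(B_M(x, r)) \approx \mu(B_\R(x, r)) \qquad \text{for all } x \in M,\ r > 0,
\end{equation*}
after which doubling of $\nu$ follows from that of $\mu$. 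The two sides differ only through at most two ``boundary gaps'' straddling $\partial B_\R(x, r)$, and~(\ref{alphansmall}) together with niceness will force any such $J$ to satisfy $J \subset B_\R(x, Cr)$ for a universal constant $C$, so that doubling of $\mu$ absorbs the discrepancy.

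For part~(\ref{athick}) I would reverse the procedure: given a doubling $\mu$ on $M$, extend each atom $\mu(\{x_J\})$ over $J$ via a scaled copy of the measure from Lemma~\ref{lemma:regdoubl} with a fixed $p$ satisfying $\sum \alpha_n^p < \infty$, producing $\widetilde{\mu}$ on $\R$ with $\widetilde{\mu}(C) = \mu(C)$. Niceness and~(\ref{alphansmall}) are used again to verify that $\widetilde{\mu}$ is doubling on $\R$, and then Staples--Ward~\cite[Theorem~1.4]{sw} applied to the $(\alpha_n)$-thick set $C$ forces $\widetilde{\mu}(C) > 0$, hence $\mu(C) > 0$. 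Part~(\ref{areg}) then follows by combining~(\ref{apor})--(\ref{apor'}): $(\alpha_n)$-regularity supplies both $(\lambda\alpha_n)$-porosity and $(\Lambda\alpha_n)$-thickness, and in every case where parts~(\ref{athick'})--(\ref{apor'}) are invoked one has $\alpha_n \to 0$, so~(\ref{alphansmall}) holds from some level on and a truncation of finitely many initial levels reduces matters to the stated hypotheses.

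The main obstacle throughout is the doubling verification of the transferred measures $\nu$ and $\widetilde{\mu}$. Condition~(\ref{alphansmall}) is exactly what keeps $x_J$ in the central portion of its containing construction interval, which is what allows the location of the relevant midpoints with respect to an arbitrary ball in $M$ to be tied to the geometry of gaps in $\R$; this link is what lets the doubling inequalities on one side be transported to the other.
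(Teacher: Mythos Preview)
Your overall strategy---transferring doubling measures between $\R$ and $M$ by collapsing gap mass to midpoints and vice versa---is exactly the paper's; both directions are packaged there as Lemma~\ref{lemma:exp}. Your treatment of part~(\ref{apor}) via Lemma~\ref{wuprop} matches the paper's (with an inessential change of centres), and the push-forward used for parts~(\ref{athick'}) and~(\ref{apor'}) is the first half of Lemma~\ref{lemma:exp}. One caveat: the two-sided comparison $\nu(B_M(x,r))\approx\mu(B_\R(x,r))$ you aim for is false in general (take $x=x_J$ and $r<|J|/2$: then $\nu(B_M(x,r))=\mu(J)$ while $\mu(B_\R(x,r))$ can be much smaller; likewise the claim that every boundary gap satisfies $J\subset B_\R(x,Cr)$ fails when a large gap has an endpoint near $x$). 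The paper sidesteps this by proving only the one-sided estimates $\nu(B_M(x,2r))\lesssim\mu(B(x,r))$ and $\nu(B_M(x,r))\gtrsim\mu(B(x,r))$ under the side condition $B(x,2r)\cap C\neq\emptyset$, the complementary case being trivial. This is a repairable detail.

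The genuine gap is in part~(\ref{athick}). Filling each $J$ with a scaled copy of the Lemma~\ref{lemma:regdoubl} measure for a \emph{fixed} exponent $p$ will not, for an arbitrary doubling $\mu$ on $M$, yield a doubling $\widetilde\mu$ on $\R$. At an endpoint $u$ of $J$ your filling gives $\widetilde\mu([u-s,u])\approx(s/|J|)^{p}\mu(\{x_J\})$, whereas on the other side $\widetilde\mu([u,u+s])$ is governed by $\mu(B_M(u,s))$, whose decay rate in $s$ is dictated by the given $\mu$ and need not be comparable to $s^{p}$. A ball centred at $u-s$ of radius $s$ then has $\widetilde\mu$-mass $\approx(s/|J|)^{p}\mu(\{x_J\})$, but its double reaches past $u$ and picks up $\gtrsim\mu(B_M(u,s))$; the ratio is unbounded in general. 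The paper's construction in the second half of Lemma~\ref{lemma:exp} deals with this by filling $J$ through a Whitney decomposition whose piece at distance $\approx t$ from $u$ carries mass equal (up to a bounded normalisation) to the $\nu$-mass of the \emph{reflected} interval on the $C$-side of $u$. This tailoring of the gap measure to the given $\mu$ is essential; a uniform power law cannot replace it.
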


Our main tool to prove Theorem \ref{thm:atomic} is the following
lemma. We denote by $\delta_x$ the Dirac unit mass located at $x\in\R$.

\begin{lemma}\label{lemma:exp}
Suppose that $C=C_{\{\mathcal{I}_n,\mathcal{J}_n\}}$ is a nice Cantor set
and assume that \eqref{alphansmall} holds.
Let $M=M_{\{\mathcal{I}_n,\mathcal{J}_n\}}$.  
If $\mu$ is a doubling measure on $[\inf C,\sup C]$, we may define a doubling
measure $\nu$ on $M$ by setting
$\nu=\mu|_C+\sum_{J\in\mathcal{J}}\mu(J)\delta_{x_J}$. On the other
hand, if $\nu$ is a doubling measure on $M$, there is a doubling
measure $\mu$ on $[\inf C,\sup C]$ so that $\nu|_C=\mu|_C$ and
$\mu(J)=\nu\{x_J\}$ for all $J\in\mathcal{J}$. 
\end{lemma}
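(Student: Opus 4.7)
My plan is to prove both directions by exploiting the identity
\[
\nu(B(x, r) \cap M) = \mu(S(x, r)),\quad S(x, r) := (B(x, r) \cap C) \cup \bigcup_{J : x_J \in B(x, r)} J,
\]
which is immediate since the gaps are pairwise disjoint and disjoint from $C$. The key geometric input, which I extract from hypothesis \eqref{alphansmall} together with niceness, is that $\dist(x_J, \partial I) \geq |J|$ for every gap $J$ and its covering interval $I$. Hence $|x - x_J| \geq |J|/2$ whenever $x \in M$ and $x \neq x_J$, so if $x_J \in B(x, 2r)$ with $x \neq x_J$, then $|J| < 4r$ and $J \subset B(x, 4r)$. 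In particular, no gap caught by its midpoint in $B(x, 2r)$ can stretch very far outside.

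For the forward direction I split into two cases. In the singleton case $x = x_{J_0}$ with $|J_0| \geq 4r$ we have $B(x, 2r) \subset J_0$, so $B(x, 2r) \cap M = B(x, r) \cap M = \{x_{J_0}\}$ and doubling is trivial. Otherwise the geometric fact gives $S(x, 2r) \subset B(x, 4r)$, whence $\nu(B(x, 2r) \cap M) \leq \mu(B(x, 4r)) \lesssim \mu(B(x, r))$ by doubling of $\mu$. It remains to show the matching lower bound $\mu(S(x, r)) \gtrsim \mu(B(x, r))$. Now $B(x, r) \setminus S(x, r)$ is covered by at most two ``boundary'' gaps $J^\pm$ whose midpoints lie outside $B(x, r)$ but which meet $B(x, r)$; each $J^\pm$ has an endpoint $a^\pm \in C \cap B(x, r)$. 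Applying the standard consequence of doubling on $\R$ that $\mu([a, a+\ell]) \approx \mu([a-\ell, a])$ at $a = a^\pm$, the mass $\mu(J^\pm \cap B(x, r))$ is comparable to that of a subinterval of $B(x, r)$ abutting $a^\pm$ on the opposite side. As this subinterval lies in $S(x, r)$ (modulo at most a single further boundary-gap piece, handled by iterating the same estimate), the boundary contributions are absorbed into a bounded multiple of $\mu(S(x, r))$.

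For the backward direction I start with $\nu$ doubling on $M$ and construct $\mu$ on $[\inf C, \sup C]$ by setting $\mu|_C := \nu|_C$ and, on each gap $J$, spreading the atomic mass $\nu\{x_J\}$ as a rescaled copy of the doubling measure from Lemma \ref{lemma:regdoubl} with a single exponent $p$ chosen in terms of the doubling constant of $\nu$. Then $\mu|_J$ is doubling with $J$-uniform constants and satisfies $\mu(J \cap [a_J, a_J + t]) \approx (t/|J|)^p\, \nu\{x_J\}$ near the endpoints of $J$. Doubling of $\mu$ on $[\inf C, \sup C]$ is then verified case by case: balls contained in a single gap inherit the property from $\mu|_J$, while balls meeting $C$ are treated via the identity above, giving $\mu(B(x, r)) \approx \nu(B(x, r) \cap M)$ up to partial-gap contributions that the polynomial decay controls once $p$ is taken large enough.

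The principal obstacle is the lower bound $\mu(S(x, r)) \gtrsim \mu(B(x, r))$ in the forward direction: handling the ``boundary gaps'' $J^\pm$ requires a careful use of the half-ball property and is most delicate when $x$ is itself an endpoint of a large gap. In the backward direction the analogous subtlety is choosing the exponent $p$ large enough relative to the doubling constant of $\nu$ so that the polynomial decay on each gap dominates partial-gap overlap contributions uniformly.
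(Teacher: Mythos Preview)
Your forward direction is essentially the paper's argument repackaged through the set $S(x,r)$; the upper bound is clean, and the lower bound can be completed, though your ``iterate the reflection'' step as written does not close (from $A\lesssim B+C$ and $C\lesssim B+A$ you get nothing). The paper instead uses Lemma~\ref{apulemma} to obtain $b-a\gtrsim r$ for $a=\inf(B(x,r)\cap C)$, $b=\sup(B(x,r)\cap C)$ when $B(x,r/2)\cap C\neq\emptyset$, and then $\nu(B_M(x,r))\ge\mu[a,b]\gtrsim\mu(B(x,r))$; you should invoke the separation estimate directly rather than iterate.

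The backward direction has a genuine gap. A single polynomial profile cannot work for every doubling $\nu$. At an endpoint $u$ of a gap $J$, doubling of $\mu$ forces $\mu([u,u+t])\approx\mu([u-t,u])$ for all $0<t<|J|/2$. Your construction gives $\mu([u,u+t])\approx(t/|J|)^p\,\nu\{x_J\}$, while on the outside $\mu([u-t,u])\ge\nu([u-t,u]\cap C)$. Doubling of $\nu$ only yields the \emph{one-sided} bound $\nu(B_M(u,t))\gtrsim(t/|J|)^{q}\,\nu(B_M(u,|J|))$ with $q=\log_2 c_\nu$; there is no matching upper bound with a fixed exponent, so the two sides need not be comparable. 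Concretely, take $C=C(\alpha_n)$ with $(\alpha_n)\in\ell^1$ and let $\nu$ be the image of $\mu_0=\Le|_{[0,1]}$ under the forward map. Then $\nu\{x_J\}=|J|$, $\nu|_C=\Le|_C$, and $\Le([u-t,u]\cap C)\gtrsim t$ (positive density), whereas your $\mu([u,u+t])\approx(t/|J|)^p|J|$. For any $p>1$ the ratio $(t/|J|)^{p-1}\to 0$, so $\mu$ is not doubling; yet the doubling constant of this $\nu$ exceeds $2$, so your rule ``take $p$ large relative to the doubling constant'' forces $p>1$. More generally, the scaling exponent of $\nu(B_M(u,t))$ can vary from endpoint to endpoint (anything between $0$ and $q$ is allowed by doubling), so no single $p$ can match them all. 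This is precisely why the paper does \emph{not} use Lemma~\ref{lemma:regdoubl} here: instead it fills each gap via a Whitney decomposition $\{J_k^{\pm}\}$ and assigns to $J_k^{\pm}$ the $\nu$-mass of the \emph{reflected} interval just outside $J$, so that the profile inside the gap automatically mirrors whatever $\nu$ does on the outside, endpoint by endpoint.
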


Before starting to prove Lemma \ref{lemma:exp}, we state a couple of
auxiliary results. The first one is a direct 
consequence of the doubling property.

\begin{lemma}\label{lemma:basic}
Let $\mu$ be a doubling measure on a metric space $X$ and let
$1<\Lambda<\infty$. Suppose that $x,y\in X$, $d(x,y)\leq \Lambda r$, and
$1/\Lambda\leq r/s\leq \Lambda$. Then $\mu(B(x,r))\approx\mu(B(y,s))$ where the
constants of comparability only depend on $\Lambda$ and the doubling
constant of $\mu$.
\end{lemma}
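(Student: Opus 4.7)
The plan is to show each of $B(x,r)$ and $B(y,s)$ is contained in a bounded-ratio enlargement of the other, and then to iterate the doubling property a number of times that depends only on $\Lambda$ and the doubling constant $c$ of $\mu$.

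For the upper bound on $\mu(B(y,s))$, I would use the triangle inequality together with the hypotheses $d(x,y)\le\Lambda r$ and $s\le\Lambda r$ to obtain the chain of inclusions
\[
B(y,s)\subset B(x,d(x,y)+s)\subset B(x,2\Lambda r).
\]
Applying the doubling inequality $\mu(B(x,2\rho))\le c\,\mu(B(x,\rho))$ a total of $k=\lceil\log_2(2\Lambda)\rceil$ times starting from $B(x,r)$, we get $\mu(B(y,s))\le c^{k}\mu(B(x,r))$, and the exponent $k$ depends only on $\Lambda$.

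For the reverse inequality, the symmetric argument gives $B(x,r)\subset B(y,d(x,y)+r)\subset B(y,(\Lambda+1)r)\subset B(y,\Lambda(\Lambda+1)s)$, using $r\le\Lambda s$. Then $k'=\lceil\log_2(\Lambda(\Lambda+1))\rceil$ applications of doubling around $y$ yield $\mu(B(x,r))\le c^{k'}\mu(B(y,s))$.

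Combining the two estimates gives $\mu(B(x,r))\approx\mu(B(y,s))$ with comparability constant $c^{\max(k,k')}$, which depends only on $\Lambda$ and on the doubling constant of $\mu$, as required. There is no real obstacle here; the only minor care needed is to verify that the inclusions used are genuine (which follows directly from the triangle inequality and the assumed bounds on $d(x,y)$ and $r/s$), and to note that the assumption $\mu(B(x,r))>0$ built into the definition of doubling measure ensures none of the quantities compared are zero.
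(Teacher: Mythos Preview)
Your proof is correct and is exactly the standard argument the paper has in mind: the paper does not spell out a proof at all but simply states that the lemma ``is a direct consequence of the doubling property.'' Your inclusion-and-iterate argument is precisely that direct consequence written out in full, so there is nothing to compare.
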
 

\begin{lemma}\label{apulemma}
Under the assumptions of Lemma \ref{lemma:exp}, there is $c>0$ so that
the following holds: If 
$J,J'\in\mathcal{J}$ and $K$ is the interval between $J$ and $J'$,
then $|K|\geq c \min\{|J|, |J'|\}$.  
\end{lemma}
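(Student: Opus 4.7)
The plan is to argue purely geometrically, using the niceness constant (which for clarity I will call $c_0$ here, to avoid colliding with the constant $c$ appearing in the lemma's conclusion) together with \eqref{alphansmall} to control how far each gap $J_{n,i}$ sits from the boundary of its parent interval $I_{n,i}$. A short case analysis on the relative position of the two parents $I\supset J$ and $I'\supset J'$ then finishes the proof.

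The key preliminary estimate I would establish is that
\[
\dist(J_{n,i},\partial I_{n,i})>|J_{n,i}|/2
\]
for every $I_{n,i}\in\mathcal{I}$. Indeed, niceness gives a point of $J_{n,i}$ inside the central subinterval $c_0 I_{n,i}$, so each endpoint of $J_{n,i}$ is separated from $\partial I_{n,i}$ by at least $(1-c_0)|I_{n,i}|/2-|J_{n,i}|$; the hypothesis $|J_{n,i}|<(1-c_0)|I_{n,i}|/3$ then forces this quantity to exceed $|J_{n,i}|/2$.

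Next I would use the refinement structure of $\mathcal{I}$: any two elements are either in containment relation or have disjoint interiors, and if additionally $I'\subsetneq I$ with $I'\in\mathcal{I}_{n'}$, $I\in\mathcal{I}_n$, and $n'>n$, then $I'\subset I\setminus J_{n,i}$, so $I'$ lies in exactly one of the two components of $I\setminus J_{n,i}$. With this in hand, let $J\subset I$ and $J'\subset I'$; since $J\neq J'$, also $I\neq I'$. If $I$ and $I'$ have disjoint interiors, say $I$ to the left of $I'$, then $K$ contains the segment from the right endpoint of $J$ to the right endpoint of $I$, which by the key estimate has length exceeding $|J|/2$. If instead $I'\subsetneq I$ (the reverse containment being symmetric), then $I'$ lies in one component of $I\setminus J$, say the left one; then $K$ contains the portion of $I'$ lying to the right of $J'$, which by the key estimate applied to $J'\subset I'$ has length exceeding $|J'|/2$. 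Either way $|K|>\min(|J|,|J'|)/2$, so the lemma holds with $c=1/2$.

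There is no real obstacle here; the argument is elementary once the two structural facts — endpoint control from niceness and \eqref{alphansmall}, plus the refinement structure of $\mathcal{I}$ — are in place. The one point worth flagging is the asymmetry in the containment case: niceness must be applied to the \emph{smaller} parent $I'$, which naturally yields a bound in terms of $|J'|$; this is precisely why the conclusion takes the $\min$ of the two gap lengths rather than either length individually.
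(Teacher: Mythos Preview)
Your argument is correct and follows essentially the same route as the paper's proof: both use niceness together with \eqref{alphansmall} to bound $\dist(J,\partial I)$ from below by a constant multiple of $|J|$, and both exploit the nesting/disjointness structure of $\mathcal{I}$ to reduce to that estimate. The paper compresses your case split into the single observation that in every case one of $J\cap I'=\emptyset$ or $J'\cap I=\emptyset$ holds, and bounds in terms of the corresponding gap; your explicit disjoint-versus-containment treatment and your sharper constant $c=1/2$ are cosmetic differences.
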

\begin{proof}
Let $I$ (resp. $I'$) be the smallest interval from $\mathcal{I}$
containing $J$ (resp. $J'$).
Then $I\subset I'$, $I'\subset I$ or $I\cap
I'=\emptyset$. In any case, $J\cap I'=\emptyset$ or $J'\cap
I=\emptyset$. We may assume that $J'\cap
I=\emptyset$. 
Using
\eqref{alphansmall}, we get
$|K|=\dist(J,J')\geq\dist(J,\partial
I)\geq(1-c)|I|/2-|J|\geq|I|(1-c)/6>|J|(1-c)/6$.  
\end{proof}

\begin{proof}[Proof of Lemma \ref{lemma:exp}]
We assume without loss of generality that $\inf C=0$, $\sup C=1$.
By $B(x,r)$ we denote the Euclidean interval $B(x,r)=]x-r,x+r[$
whereas $B_M(x,r)=B(x,r)\cap M$, for $x\in M$.

To prove the first assertion, suppose that $\mu$ is a doubling
measure on $[0,1]$ and let $\nu$ be defined as in the lemma.  
We have to verify that $\nu$ is a doubling measure on $M$. Fix $x\in M$ and
$r>0$. If $B(x,2r)\cap C=\emptyset$, we have
$B_M(x,r)=B_M(x,2r)=\{x\}$ and there is nothing to prove. Proving that
$\nu$ is doubling thus reduces to showing the following. If $x\in M$ and
$B(x,2r)\cap C\neq \emptyset$, then
\begin{align}
\label{tupla}\nu(B_M(x,2r))&\lesssim\mu(B(x,r))\text{ and}\\
\label{single}\nu(B_M(x,r))&\gtrsim\mu(B(x,r)).
\end{align}
We may write $\nu(B_M(x,2r))=\mu[a,b]+\nu(E)$, where
$a=\inf(B(x,2r)\cap C)$, $b=\sup(B(x,2r)\cap C)$ and $E$ is either
empty or contains one or two isolated points of $M$. By the
construction of $\nu$,
we have $\nu(E)\leq\mu(B(x,4r))$ and thus
\begin{align*}
&\nu(B_M(x,2r))\leq\mu[a,b]+\mu(B(x,4r))\leq
\mu(B(x,2r))+\mu(B(x,4r))\\
&\lesssim\mu(B(x,r))
\end{align*} 
since $\mu$ is
doubling. Thus \eqref{tupla} follows. 
  
To show \eqref{single}, assume first that $B(x,r/2)\cap
C\neq\emptyset$. If we let $a=\inf(B(x,r)\cap C)$, $b=\sup(B(x,r)\cap
C)$, then Lemma \ref{apulemma} implies $|b-a|\gtrsim r$ and thus
\[\nu(B_M(x,r))\geq\nu([a,b]\cap M)=\mu[a,b]\gtrsim\mu(B(x,r))\] 
by
Lemma \ref{lemma:basic}. If
$B(x,r/2)\cap C=\emptyset$, then $B(x,r/2)\subset J$ for some
$J\in\mathcal{J}$ with $|J|\geq r$, and we have
\[\nu(B_M(x,r))\geq \nu\{x\}=\mu(J)\gtrsim\mu(B(x,r)).\] 
Thus we have
\eqref{single} and it follows that $\nu$ is a doubling measure on $M$.

To give the details for the latter claim of the lemma requires a bit more work. Consider a doubling measure $\nu$ on $M$. We define $\mu$ by the following
procedure: Let $c>0$ be the constant of Lemma \ref{apulemma} and
choose $1/(1+c)<t<1$. For $J=]x-r,x+r[\in\mathcal{J}$, consider
its division to Whitney type sub-intervals
\begin{align*}
J^{+}_{k}&=]x+r-(1-t)^kr,x+r-(1-t)^{k+1}r[,\\
J^{-}_{k}&=]x-r+(1-t)^{k+1}r,x-r+(1-t)^{k}r[
\end{align*}
 for
$k\in\{0,1,2,\ldots\}$. Next define
\begin{align*}
m_{J_{k}^+}&=\nu([x+r+(1-t)^{k+1}r,x+r+(1-t)^kr[\cap M),\\
m_{J_{k}^-}&=\nu(]x-r-(1-t)^{k}r,x-r-(1-t)^{k+1}r]\cap M). 
\end{align*}
If $K$ is
one of the intervals $J^{+}_k$ or $J^{-}_k$, let $\mu|_{K}$  
be uniformly distributed on $K$ with total measure
\[\mu(K)=\frac{m_{K}\nu\{x_J\}}{\nu\left((2J\cap M)\setminus\{x_J\}\right)}.\] 
Observe
that the scaling factor $\nu\{x_J\}/\nu((2J\cap M)\setminus\{x_J\})$
is bounded away form $0$ and $\infty$ as $\nu$ is doubling. Thus we
have 
\begin{equation}\label{mK}
\mu(K)\approx m_{K} 
\end{equation}
for all $K\in\{J_{k}^+, J_{k}^{-}\}_{k=0}^\infty$.
To complete the definition of $\mu$, we
set $\mu|_C=\nu|_C$. 

It is now evident that $\mu(J)=\nu\{x_J\}$ for each $J\in\mathcal{J}$
and it remains to show that $\mu$ is doubling. For this purpose, we prove the following chain of claims.
We formulate some of the claims for $u_2$
and $J^+_{k}$ but due to symmetry, similar 
claims are valid for $u_1$ and $J^{-}_k$ as well.

Let $J=]u_1,u_2[\in\mathcal{J}$. Then 
\begin{enumerate}[(i)]
\item \label{c1} For each $J^{+}_k$, there is $y\in M$ with $y-u_2\approx
  |J^{+}_k|$ such that
$\nu(B_M(y,|J^{+}_k|)\approx\mu(J^{+}_k)$.
\item If $K_0$ and $K_1$\label{c2}
are two consecutive intervals among $J^+_{k}$, $J^{-}_k$, then
$m_{K_0}\approx m_{K_1}$.
\item $\mu(J^{+}_k)\approx\mu(\cup_{n>k}J^{+}_k)$\label{c3}
\item If $0<s<|J|/2$, then
  $\nu([u_2,u_2+s]\cap M)\approx\mu[u_2-s,u_2]$
\label{c4}
\item If $I$ is an interval with $I\cap C\neq\emptyset$ and
  $\kappa>1$, then 
  $\mu(I)\leq c(\kappa,t)\nu(\kappa I\cap M)$.\label{cextra}
\item If $0<s<|J|$, then $\mu[u_2,u_2+s]\approx\nu([u_2,u_2+s]\cap
  M)$.\label{c5}  
\end{enumerate}

We now start to prove the claims \eqref{c1}--\eqref{c5}. Let $c>0$ be
the constant of Lemma \ref{apulemma}. Since $t>1/(1+c)$, 
we may choose $\varrho=\varrho(t)>0$ such that $1-t+\varrho t<c(1-2\varrho)t$.
Let $K=J^{+}_k$.  
By scaling, we may assume that $(1-t)^kr=1$ so that 
$|K|=t$ and $\dist(K,u_2)=1-t$. Denote 
\begin{align*}
K'&=[u_2+(1-t),u_2+1[,\\
(1-2\varrho)K'&=[u_2+(1-t)+\varrho t, u_2+1-\varrho t[. 
\end{align*}
It follows from
Lemma \ref{apulemma} and the choice of $t$ and $\varrho$ that
$(1-2\varrho)K'\cap M\neq\emptyset$. Thus, we may choose $y\in M$ so
that $B_M(y,\varrho t)\subset K'$. See Figure \ref{fig:rho}. Using the
doubling property of 
$\nu$, and the way $\mu$ is defined, we get 
\[\mu(K)\approx\nu(K'\cap
M)\geq\nu(B_M(y,\varrho t))\gtrsim\nu
(B_M(y,t))\geq\nu(K'\cap M)\approx\mu(K).\] 
As $(1-t)\leq|y-u_2|\leq
1$, we have $|y-u_2|\approx t=|J^{+}_k|$ and \eqref{c1}
follows. 

\begin{figure}
\centering
\psfrag{1}{$K$}
\psfrag{2}{$t$}
\psfrag{3}{$1-t$}
\psfrag{4}{$u_2$}
\psfrag{5}{$K'$}
\psfrag{6}{$\varrho t$}
\psfrag{7}{$y$}
\psfrag{8}{$(1-2\varrho)K'$}
\includegraphics%[width=1.0\textwidth]
{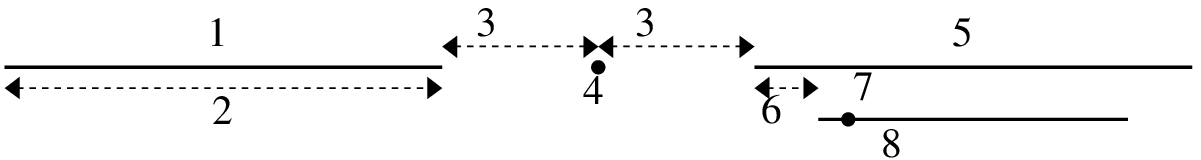}
\caption{Illustration for the proof of \eqref{c1}.}
\label{fig:rho}
\end{figure}

Let $K_0$ and $K_1$ be two consecutive intervals
among $\{J^+_{k}, J^{-}_k\}$, and $y_0, y_1\in M$ be points given
by \eqref{c1}. Then
$|y_0-y_1|\lesssim|K_0|\approx|K_1|$ and combined with \eqref{mK} and Lemma
\ref{lemma:basic}, we get
$m_{K_0}\approx\mu(K_0)\approx\nu(B_M(y_0,|K_0|))\approx\nu(B_M(y_1,|K_1|))
\approx\mu(K_1)\approx
m_{K_1}$ implying \eqref{c2}.

For $k=0,1,2,\ldots$, let $y_k\in M$ be a point satisfying \eqref{c1}. Since
$\nu$ is doubling, we get (using \eqref{mK} and Lemma \ref{lemma:basic})
\begin{gather*}
\mu(\bigcup_{n=k+1}^\infty
J^{+}_n)=\mu[u_2-\tfrac{1-t}{t}|J^{+}_k|,u_2]\approx\sum_{n>k}m_{J^{+}_n}\\
=\nu\left([u_2,u_2+\tfrac{1-t}{t}|J^{+}_k|[\cap
M\right) \lesssim\nu(B_M(y_k,|J^{+}_k|)\approx\mu(J^{+}_k). 
\end{gather*}
On
the other hand, using \eqref{c2} we see that $\mu(J^{+}_k)\approx
m_{J^{+}_k}\approx
m_{J^{+}_{k+1}}\approx\mu(J^{+}_{k+1})\leq\mu(\bigcup_{n=k+1}^\infty
  J^{+}_{n})$ and \eqref{c3} follows.

By construction, we have
\[\mu[u_2-\tfrac1t|J^{+}_k|,u_2]
\approx\sum_{n\geq k}m_{J^{+}_n}
=\nu[u_2,u_2+\tfrac1t|J^{+}_k|[.\] 
Combining this with
\eqref{c3} yields \eqref{c4}.

Let $s<|J|$ and let $K$ be the largest interval among $\{J^{+}_k\}$
contained in $[u_2-s,u_2]$. With the help of \eqref{c1}--\eqref{c3},
we see that 
\[\mu[u_2-s,u_2]\approx\mu(K)\approx m_K\leq\nu([u_2,u_2+s]\cap M)\] (and
similarly $\mu[u_1,u_1+s]\lesssim\nu([u_1-s,u_1]\cap M)$. 
To prove \eqref{cextra}, we apply this
observation for the components 
of $I\setminus C$ to obtain $\mu(I\setminus C)\lesssim\nu(3 I\cap
M)$. Choosing $y\in I\cap C$, we have 
\begin{align*}
\mu(I)&=\nu(I\cap
C)+\mu(I\setminus C)\lesssim 2\nu(3 I\cap M)\lesssim
c(\kappa,t)\,\nu\left(B_M(y,\tfrac{\kappa-1}{2} I)\right)\\
&\leq c(\kappa,t)\nu(\kappa
I\cap M)
\end{align*} 
for each $\kappa>1$. 

To prove \eqref{c5}, let $v=\sup C\cap[u_2,u_2+s]$. Using Lemma
\ref{apulemma}, we may find $y\in M$ and $r\gtrsim s$ so that
$B_M(y,r)\subset[u_2,v]$. Now 
\[\nu([u_2,u_2+s]\cap
M)\lesssim\nu(B_M(y,r))\leq\nu([u_2,v]\cap
M)=\mu[u_2,v]\leq\mu[u_2,u_2+s].\] 
On the other hand, we have
$\mu[u_2,v]=\nu([u_2,v]\cap M)$ and
\[\mu[v,u_2+s]\lesssim\nu([u_2,v+s]\cap M)
\lesssim\nu(B_M(y,r))\leq\nu([u_2,u_2+s]\cap M)\] 
using
\eqref{cextra}. Thus \eqref{c5} follows and we have verified 
all the claims \eqref{c1}--\eqref{c5}.

Let $I_1, I_2\subset [0,1]$ be two adjacent closed intervals of the same length. To
finish the proof we have to show that 
\begin{equation}\label{pakko}
\mu(I_1)\approx\mu(I_2). 
\end{equation}
To achieve this goal, we consider several different cases and
subcases.

\textit{Case a:} Both intervals $I_1$ and $I_2$ are contained in a gap
$J=]u_1,u_2[\in\mathcal{J}$. Let $\mathcal{K}=\{J^{+}_k,
J^{-}_k\,:\,k=0,1,2,\ldots\}$\\ 
\textit{Subcase a1:} If both intervals $I_1$ and $I_2$ intersect at
most $2$ intervals of $\mathcal{K}$, the estimate \eqref{pakko}
follows directly from \eqref{c2}.\\ 
\textit{Subcase a2:} If both intervals $I_i$ intersect at least $3$
elements of $\mathcal{K}$, let $K_i$ be the largest element $K\in\mathcal{K}$
contained in $I_i$. Then, it follows from \eqref{c3} and \eqref{c2}
that $\mu(I_i)\approx\mu(K_i)$. On the other hand, there is at most
one interval $K\in\mathcal{K}$ in between $K_1$ and $K_2$ and thus,
using \eqref{c2} once again, we get $\mu(K_1)\approx\mu(K_2)$.\\ 
\textit{Subcase a3:} Suppose that $I_1$ intersects at least three
sub-intervals $K\in\mathcal{K}$ whereas $I_2$ intersects at most two of
them. Again, letting $K_1$ be the largest element of $\mathcal{K}$
contained in $I_1$, we have $\mu(I_1)\approx\mu(K_1)$. Now, if
$K_2\in\mathcal{K}$ and $K_2\cap I_2\neq\emptyset$, there are at most
two intervals of $\mathcal{K}$ in between $K_1$ and $K_2$. Thus, from
\eqref{c2} we get $m_{K_2}\approx m_{K_1}$ giving
$\mu(I_1)\approx\mu(I_2)$.

\textit{Case b:} $I_1$ is contained in a gap %$]u_1,u_2[$, 
but $I_2\cap C\neq\emptyset$.
We may assume by symmetry that $I_1=[a,b]$, $I_2=[b,c]$ (where $c-b=b-a$). 
Let $d=\inf(I_2\cap C)$.\\ 
\textit{Subcase b1:} If $d-b\geq c-d$, the claims
\eqref{c5} and \eqref{c4} imply $\mu[b,c]\approx\mu[b,d]$ and from the case
\textit{a} and Lemma \ref{lemma:basic}, we obtain $\mu[b,d]\approx\mu[a,b]$.\\
\textit{Subcase b2:} If $d-b\leq c-d$, we first use the case \textit{a} to
get $\mu[a,b]\approx\mu[d-(c-d),d]$ and then \eqref{c5} and \eqref{c4}
to conclude
$\mu[d-(c-d),d]\approx\mu[d,c]\approx\mu[b,c]$.

\textit{Case c:} $I_1\cap C\neq\emptyset\neq I_2\cap C$. By symmetry,
we assume again that $I_1=[a,b]$, $I_2=[b,c]$, and denote
$r=b-a=c-b$. Let $v_1=\inf(I_1\cap C)$, $v_2=\sup(I_1\cap C)$,
$v_3=\inf(I_2\cap C)$, and $v_4=\sup(I_2\cap C)$.\\ 
\textit{Subcase c1:} If $v_2-v_1\geq r/2$ and $v_4-v_3\geq r/2$,
we can find $y_1\in M$ so that $B_M(y_1,r/8)\subset
[v_1,v_2]\cap M$ and 
\[\mu(I_1)\geq \mu[v_1,v_2]=\nu([v_1,v_2]\cap
M)\approx\nu(B_M(y_1,r/8)).\] 
As also $\mu(I_1)\lesssim\nu(2 I_1\cap
M)$ by \eqref{cextra}, $2I_1\cap M\subset B_M(y_1,2r)$, and
$\nu$ is doubling, we thus get
$\mu(I_1)\approx\nu(B_M(y_1,r))$. Repeating the argument for $I_2$
yields $B_M(y_2,r/8)\subset[v_3,v_4]\cap M$ with
$\mu(I_2)\approx\nu(B_M(y_2,r))$. Using Lemma \ref{lemma:basic}, we get
$\nu(B_M(y_1,r))\approx\nu(B_M(y_2,r))$ yielding \eqref{pakko}.\\ 
\textit{Subcase c2:} Suppose $v_2-v_1\geq r/2$ and
$v_4-v_3<r/2$. Now, as in subcase \textit{c1}, we find
$B_M(y_1,r/8)\subset [v_1,v_2]\cap M$ with
$\mu(I_1)\approx\nu(B_M(y_1,r))$. On the other hand, letting $I_3$ be
the longer of the intervals $[b,v_3]$ and $[v_4,c]$, with the help of
\eqref{c1}--\eqref{c3}, we find $y_2\in M$ with $\dist(I_3,y_2)\lesssim r$
and $s\approx  r$ such that
$\mu(I_3)\approx\nu(B_M(y_2,s))$.
Again, as $\nu$ is doubling we can use Lemma \ref{lemma:basic} 
%and \eqref{cextra} NOT NEEDED?
to conclude that
$\mu(I_1)\approx\nu(B_M(y_1,r))\approx\nu(B_M(y_2,s))\approx\mu(I_2)$
as desired.\\ 
\textit{Subcase c3:} Finally, 
if both
$v_2-v_1<r/2$ and $v_4-v_3<r/2$, we let $I_3$ be the longer of the
intervals $[a,v_1]$, $[v_2,b]$ and $I_4$ the longer of the
sub-intervals $[b,v_3]$, $[v_4,c]$. As above, we find $B_M(y_1,s_1)$ and
$B_M(y_2,s_2)$ so that
$\mu(I_1)\approx\nu(B_M(y_1,s_1))\approx\nu(B_M(y_2,s_2)\approx\mu(I_2)$.
\end{proof}

\begin{proof}[Proof of Theorem \ref{thm:atomic}]
Suppose first that $C$ is $(\alpha_n)$-porous as a subset of $\R$. The
Claim \eqref{apor} follows from the Lemma \ref{wuprop} since $C$ is
$(\alpha_n/2)$-porous as a subset of $M$. Indeed, for each
$J_{n,i}$, let $x_{n,i}=x_{J_{n,i}}$ and consider
$B_{n,i}=B_M(x_{n,i},|I_{n,i}|)$ and $B'_{n,i}=B_M(x_{n,i},
|J_{n,i}|/2)$. Then $B'_{n,i}\cap B'_{l,j}=\emptyset$ if
$(n,i)\neq(l,j)$ and moreover, $|J_{n,i}|/2\geq(\alpha_n/2)|I_{n,i}|$ for
all $n$ and $i$.

To prove the claims \eqref{athick'}--\eqref{apor'}, we use Lemma \ref{lemma:exp}.
Then \eqref{athick'} follows from
Theorem \ref{thm:thick}, \eqref{athick} from the result of
Staples and Ward \cite[Theorem 1.4]{sw}, and \eqref{apor'}
from Theorem \ref{thm:por}. Finally, \eqref{areg} follows putting
\eqref{apor}--\eqref{apor'} together. 
\end{proof}

\begin{rems}
\emph{a)} Our choice to put one isolated point in the middle of each gap is
somewhat arbitrary. The Theorem \ref{thm:atomic} (and Lemma
\ref{lemma:exp}) holds true for many
other choices of (collections) of isolated points as well. For
instance, instead of choosing the middle point of each
$J\in\mathcal{J}$, one could
consider a Whitney decomposition $\mathcal{W}_J$ of $J$ and choose
all the midpoints of the elements of $\mathcal{W}_J$ to be the
collection of isolated points inside $J$. Doubling measures on this
kind of Whitney modification sets have been considered in \cite{kw},
and \cite{www}.\\
\emph{b)} In many situations, the technical assumption \eqref{alphansmall} (used only to prove Lemma \ref{apulemma}) may be omitted. For the middle interval midpoint sets $M(\alpha_n)$, for instance, the claims \eqref{athick'}--\eqref{apor'} in Theorem 
\ref{thm:atomic} hold also without this assumption. 
\end{rems}

Kaufman and Wu \cite{kw} have posed the following problem: Does there exist
a compact set $X\subset\R$ with $X=\overline{F_X}$ and a doubling
measure $\nu$ on $X$ so that $\nu|_{E_X}$ is a doubling measure on $E_X$? 
Recall that $F_X$ is the set of isolated points of $X$ and
$E_X=X\setminus F_X$.
The following example yields a positive answer to their question.
\begin{ex}\label{kwrem}
Let $(\alpha_n)\in\ell^1$, $X=M(\alpha_n)$, $C=C(\alpha_n)$,
$\mu=\Le|_{[0,1]}$, and let $\nu$ be 
a doubling measure on $X$ given by Lemma \ref{lemma:exp}. Then
$F_X=\cup_{J\in\mathcal{J}}\{x_J\}$, $E_X=C$, and 
$X=\overline{F_X}$. Moreover, it is easy to see that $\nu|_C=\Le|_C$
is a doubling measure on $C$ since %$C$ is $1$-regular in the sense
%that 
there exists $c=c(\alpha_n)$ so that $\Le(C\cap(x-r,x+r))>cr$ for
all $x\in C$ and $0<r<1$. 
\end{ex}

\subsection{On sets with positive Lebesgue measure}

To complete the discussion on purely atomic doubling measures, we
answer a question posed by Lou, Wen, and Wu 
in \cite{lww}. As observed by Wu \cite[Example 1]{wu}, see also Lou, Wen, and Wu
\cite[Theorem 1]{lww}, it is possible to construct compact sets $X\subset[0,1]$
with Hausdorff dimension one so that all doubling measures on $X$ are
purely atomic. The examples of Wu \cite{wu} and Lou, Wen, and Wu
\cite{lww} are countable unions of self-similar Cantor sets whose
dimensions gets closer and closer to one. Another, more direct way to
obtain such a set is given by Theorem \ref{thm:atomic}: Choosing
$X=M(\alpha_n)$ for any sequence
$(\alpha_n)\notin\bigcup_{0<p<\infty}\ell^p$ such that 
\begin{equation}\label{eq:dim1}
\lim_{n\rightarrow\infty}\frac{\log\bigl(\prod_{k=1}^n(1-\alpha_k)\bigr)}{n}=0
\end{equation}
will do. Note that \eqref{eq:dim1} always holds if $\lim_{n\rightarrow\infty}\alpha_n=0$. It
was asked by Lou, Wen, and Wu \cite{lww} whether there are compact
sets $X\subset\R$ with positive Lebesgue measure so that all doubling
measures $\mu$ on $X$ are purely atomic. The answer is
negative.

\begin{prop}\label{vkprop}
If $X\subset \R$ is compact and $\Le(X)>0$, there are 
doubling measures on $X$ with nontrivial continuous part.
\end{prop}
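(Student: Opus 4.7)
My plan is to build the desired measure $\mu$ on $X$ as the sum of a continuous piece $\Le|_K$, where $K$ is a carefully chosen compact subset of $X$ on which Lebesgue measure is doubling, and an atomic ``background'' doubling measure $\nu$ on $X$ ensuring positive mass on every $X$-ball.

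Using the Lebesgue density theorem together with Egorov's theorem (iterated so as to transfer density of $X$ at its points to density of a subset of $X$ inside itself), one produces a compact $K\subset X$ with $\Le(K)>0$ and constants $c, r_0>0$ such that
\[
\Le(K\cap B(x,r))\ge c\,r \qquad\text{for all } x\in K,\ r\in(0,r_0].
\]
Combined with the trivial bound $\Le(K\cap B(x,2r))\le 4r$, this makes $\Le|_K$ a doubling measure on $K$; large-scale doubling is automatic by compactness.

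Next, choose an atomic doubling measure $\nu$ on $X$ (existing by Vol'berg--Konyagin, as $X\subset\R$ is a doubling metric space). By adding extra atoms near $K$ (using the Ahlfors regularity of $K$ from the previous step to control the geometry), one can arrange $\nu(B(x,r))\gtrsim r$ whenever $B(x,r)$ sits within distance $2r$ of $K$, which is the only range where this refinement will be needed.

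Finally, set $\mu := \Le|_K + \nu$, viewed as a measure on $X$. Its continuous part is $\Le|_K$, which is nontrivial because $\Le(K)>0$. For the doubling property on $X$, fix $x\in X$ and $r>0$. If $B(x,2r)\cap K=\emptyset$ the continuous piece contributes nothing and doubling follows from the doubling of $\nu$; otherwise $B(x,2r)$ lies within distance $2r$ of $K$, so
\[
\mu(B(x,2r))\le 4r + C_\nu\,\nu(B(x,r))\lesssim \nu(B(x,r))\le \mu(B(x,r)),
\]
where the middle inequality uses $\nu(B(x,r))\gtrsim r$ guaranteed by the construction of $\nu$.

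The \textbf{main obstacle} is the production of $K$ in the first step. The Lebesgue density theorem gives density of $X$ at its own points but not density of a prescribed subset of $X$ in itself, and upgrading one to the other requires an iterative Egorov argument that balances the amount of measure lost at each stage against the strength of the uniform density bound one extracts. Once $K$ is in hand, the construction of $\nu$ and the verification of the doubling condition are comparatively routine, and the positivity $\Le(X)>0$ is used in both places: first to find a density point of $X$ to start the iteration, and then through the Ahlfors regularity of $K$ to place the atoms of $\nu$ with the correct scale-linear mass.
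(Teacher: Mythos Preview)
Your outline contains a factual error and a genuine gap, and it misses the single observation that makes the proposition nearly immediate.

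First, the error: Vol'berg--Konyagin does \emph{not} produce atomic doubling measures. Their theorem yields an $n$-homogeneous measure (here $n=1$), i.e.\ a doubling measure $\mu$ on $X$ satisfying $\mu(B(x,\lambda r))\le c\lambda\,\mu(B(x,r))$ for all $\lambda\ge1$. In particular, if $X$ has no isolated points (e.g.\ $X$ a fat Cantor set), then $X$ carries no purely atomic doubling measure at all, so your step~2 can fail as stated.

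Second, the gap: your step~3, ``adding extra atoms near $K$'' so that $\nu(B(x,r))\gtrsim r$ there while keeping $\nu$ doubling on all of $X$, is asserted but not argued. Placing scale-linear atoms along $K$ without destroying the doubling of $\nu$ at points away from $K$ (or without producing infinite total mass) is a delicate construction, and nothing in the Ahlfors regularity of $K$ by itself tells you how to do it.

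The paper avoids all of this. Setting $\lambda=1/r$ in the $1$-homogeneity inequality gives $\mu(B(x,r))\ge r/c$ for every $x\in X$ and $0<r<1$. So the Vol'berg--Konyagin measure \emph{already} has the linear lower bound you are trying to manufacture, everywhere on $X$, not just near some subset $K$. One then takes $\nu=\mu+\Le|_X$ directly (no passage to a subset $K$ is needed) and checks
\[
\nu(B(x,2r))\le \mu(B(x,2r))+4r\le c'\mu(B(x,r))+4c\,\mu(B(x,r))\le(c'+4c)\,\nu(B(x,r)).
\]
Since $\Le(X)>0$, the continuous part of $\nu$ is nontrivial. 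Your Egorov construction of $K$ and the atom-placement step are both unnecessary once you use the homogeneity, and dropping them removes the unjustified part of your argument.
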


\begin{proof}
The claim is a direct consequence of the results of Vol'berg
and Konyagin \cite{vk}, see also \cite[\S 13]{he}. For subsets of
$\Rn$, they proved the 
existence of $n$-homogeneous measures. In our case this gives a constant $c<\infty$ and a
measure $\mu$ on $X$ so that
\begin{equation*}
\mu(B(x,\lambda r))\leq c \lambda \mu(B(x,r))\text{ for all }x\in X,
\lambda \geq 1,\text{ and }r>0\,.
\end{equation*} 
Putting $\lambda=1/r$, it follows 
that $c\mu(B(x,r))\geq r$ for all $x\in X$ and
$0<r<1$. Now we may define $\nu=\mu+\Le|_X$. If $c'$ is the doubling constant of
$\mu$, it follows that for all $x\in X$ and $0<r<1$,
\begin{align*}
\nu(B(x,2r))&=\mu(B(x,2r))+\Le(X\cap B(x,2r))\leq \mu(B(x,2r))+2r\\
&\leq (c'+2c)\mu(B(x,r))\leq(c'+2c)\nu(B(x,r))
\end{align*}
so $\nu$ is a doubling measure on $X$. 
As $\Le(X)>0$, it follows that $\nu$ has a
nontrivial (absolutely) continuous part.
\end{proof}

\begin{rem}
While this paper was in preparation, there has been some independent
research on the topics of the last section.  Wang and Wen \cite{ww} have
constructed a set $X$ with the same properties as in Example
\ref{kwrem} and Lou and Wu \cite{lw} have also
observed that Proposition \ref{vkprop} follows from the above mentioned
result of Vol'berg and Konyagin.  
\end{rem}

\emph{Acknowledgements.} We are grateful to Kevin Wildrick
for useful discussions.

\bibliographystyle{alpha}
\bibliography{cantordoubling}

\end{document}